\theoremstyle{plain}
\newtheorem{theorem}{Theorem}[section]
\newtheorem{lemma}[theorem]{Lemma}
\theoremstyle{definition}
\newtheorem*{remark}{Remark}
\newtheorem{assumption}{Assumption}
\theoremstyle{remark}
\DeclareMathOperator*{\argmin}{argmin}
\DeclareMathOperator{\var}{Var}
\DeclareMathOperator{\cov}{Cov}
\newcommand{\sumn}{\sum_{i=1}^{n}}
\newcommand{\sumij}{\sum_{i=1}^{m_1}\sum_{j=1}^{m_2}}
\newcommand{\dif}{\hat{A}_H-A_{0}}
\DeclareMathOperator{\sign}{sign}
\newcommand{\V}{\Vert}
\newcommand{\Vf}{\Vert_{w(F)}}
 \newcommand{\ind}{\textbf{1}}
\newcommand{\sxix}{\frac{1}{n}\sum_{i=1}^n \xi_iX_i}
\newcommand{\spx}{\frac{1}{n}\sum_{i=1}^n \phi_{\tau}(\xi_i)X_i}
\newcommand{\ra}{\rangle}
\newcommand{\la}{\langle}
\newcommand{\sumi}{\frac{1}{n}\sum_{i=1}^n}
\newcommand{\Ah}{\hat{A}}
\newcommand{\At}{\tilde{A}}
\newcommand{\Ahh}{\hat{A}_H}
\newcommand{\As}{\hat{A}_S}
\newcommand{\Eb}{\mathbb{E}}
\newcommand{\Ec}{\mathcal{E}_i(j,k)}
\newcommand{\Pb}{\mathbb{P}}
\newcommand{\Rb}{\mathbb{R}}
\newcommand{\Rc}{\mathcal{R}}
\newcommand{\xb}{\bar{\xi}}
\newcommand{\lb}{\left(}
\newcommand{\rb}{\right)}
\newcommand{\lnorm}{\left \|}
\newcommand{\rnorm}{\right\|}
\newcommand{\lbr}{\left(}
\newcommand{\rbr}{\right)}
\newcommand{\indc}{\textbf{1}_{\chi_i}}
\newcommand{\daswf}{\|\hat{A}_S-A_0\|_{w(F)}}
\newcommand{\dasf}{\|\hat{A}_S-A_0\|_{F}}
\begin{document}
\begin{frontmatter}
\title{Sharp Bounds for Multiple Models in Matrix Completion}
\runtitle{Sharp Bounds for  Matrix Completion}

\begin{aug}
\author[A]{\fnms{Dali}~\snm{Liu}\ead[label=e1]{liudali@msu.edu}},
\author[A]{\fnms{Haolei}~\snm{Weng}\ead[label=e2]{whaoleng@msu.edu}}

\address[A]{Department of Statistics and Probability,
Michigan State University\printead[presep={,\ }]{e1,e2}}

\runauthor{D. Liu and H. Weng}
\end{aug}

\begin{abstract}
In this paper, we demonstrate how a class of advanced matrix concentration inequalities, introduced in  \cite{brailovskaya2024universality}, can be used to eliminate the dimensional factor in the convergence rate of matrix completion.  This dimensional factor represents a significant gap between the upper bound and the minimax lower bound, especially in high dimension. Through a more precise spectral norm analysis, we remove the dimensional factors for three popular matrix completion estimators, thereby establishing their minimax rate optimality. 
\end{abstract}

\begin{keyword}[class=MSC]
\kwd[Primary ]{62J99}
\kwd{62H12}
\kwd[; secondary ]{60B20}
\kwd{15A83}
\end{keyword}

\begin{keyword}
\kwd{Matrix completion}
\kwd{low-rank matrix estimation}
\kwd{minimax optimality}
\end{keyword}

\end{frontmatter}

\section{Introduction}\label{section: intro}
Matrix recovery from a small subset of entries, often referred to as matrix completion, is a classical problem in high-dimensional statistics with wide-ranging applications across various fields.  Numerous frameworks and methods have been developed for matrix completion, with a fundamental understanding that a large matrix cannot be recovered unless it exhibits certain low-rank properties.  As a convex relaxation of rank penalization,  nuclear norm penalized trace regression has become a popular approach and corresponding theoretical research has been fruitful. See, for example, \cite{candes2010power}, \cite{recht2011simpler}, \cite{koltchinskii2011nuclear}, \cite{negahban2012restricted}, \cite{klopp2014noisy}, \cite{klopp2017robust}, \cite{minsker2018sub}, \cite{yu2024low}.

In particular, many studies  focus on the convergence rate of the nuclear norm penalization methods \cite{koltchinskii2011nuclear, negahban2012restricted, klopp2014noisy, yu2024low}. However, the minimax lower bound, obtained in \cite{koltchinskii2011nuclear} and \cite{negahban2012restricted}, does not include a logarithmic dimension factor that always appears in the upper bound of the convergence rate. 
This discrepancy between the two bounds becomes more pronounced in high-dimensional settings, highlighting a potential gap in the existing theoretical understanding.
Due to the ubiquitousness of this issue, all the previous works have had to qualify their results by stating ``our estimator is minimax optimal, up to a logarithmic factor''. 
In this paper, we leverage a class of sharp matrix concentration inequalities to remove the logarithmic dimension factor.
After our work, the qualification will no longer be necessary.

\subsection*{Problem setup}
Let $A_0\in \Rb^{m_1\times m_2}$ be the unknown matrix we aim to recover. We observe independent samples $\{(X_i, Y_i)\}_{i=1}^{n}$ from the model
\begin{equation}
\label{model:mc}
Y_i=\la X_i, A_0\ra +\xi_i,i=1,\ldots, n,
\end{equation}
where $X_i\in \mathbb{R}^{m_1\times m_2}$, $\langle X_i, A_0\rangle=\text{tr}(X_i^TA_0)$, and $\text{tr}(B)$ denotes the trace of  matrix $B$.
The sampling matrices $X_i\in\mathbb{R}^{m_1\times m_2}$ take values from $\{ e_j(m_1)e_k^T(m_2),\\ 1\leq j\leq m_1, 1\leq k\leq m_2\}$, where 
$\{e_j(m_1)\}_{j=1}^{m_1},\{e_k(m_2)\}_{k=1}^{m_2}$ denote standard basis vectors in $\mathbb{R}^{m_1}, \mathbb{R}^{m_2}$, respectively. This implies that each $X_i$ only has one non-zero entry,  indicating the location of the sampled entry.

\subsection*{The logarithmic dimension factor issue}

The matrix completion problem is often addressed by a penalized regression.  Assuming $A_0$ is of low rank, 
a natural choice for the penalty is the rank of a matrix.
However, the rank penalty is non-convex and makes the corresponding  program difficult to solve. 
The nuclear norm $\|\cdot\|_*$, namely the sum of all the singular values of a matrix, is usually used as a convex relaxation for the rank.
There are  many classical works on nuclear norm penalization methods under different assumptions. 
Assuming the noise has exponential decay,  \cite{koltchinskii2011nuclear} consider  the case that the sampling distribution is known; 
\cite{negahban2012restricted} study the situation where  the row indices and the column indices of the observations are sampled independently and the penalty is a weighted nuclear norm;
\cite{klopp2014noisy} explores the problem for general sampling distributions. Consider the classical nuclear norm penalized least squares estimator
\begin{equation*}
 \hat{A}=\argmin_{\|A\|_{\infty}\leq a }  \left\{ \dfrac{1}{n}\sum_{i=1}^n (Y_i-\langle X_i,A\rangle)^2+\lambda \|A\|_*  \right\},
\end{equation*}
where $\|A\|_{\infty}=\max_{1\leq j\leq m_1, 1\leq k\leq  m_2} |a_{jk}|$ and $a>0$.
Assuming $\text{rank}(A_0)\leq r$, $X_i$ has a  general  distribution and the noise has exponential decay, \cite{klopp2014noisy} obtains the following bound that holds with high probability:
\begin{equation}\label{inequality: convergence rate of replacement model}
 \dfrac{\|\hat A-A_0\|_F^2}{m_1m_2} \lesssim \log (m_1+m_2) \dfrac{r\max(m_1,m_2)}{n},  
\end{equation}
where $\lesssim$ means that the inequality holds up to some numeric constant, and $\|A\|_F=\\\sqrt{\sum_{j=1}^{m_1}\sum_{k=1}^{m_2}a^2_{jk}}$ is the Frobenius norm of $A=(a_{jk})$. However, the minimax lower bound, as established in \cite{koltchinskii2011nuclear}, does not involve the term $\log (m_1+m_2)$ and is expressed as
\begin{equation*}
   \inf_{\hat{A}}\sup_{\mathcal{A}} \Pb\left( \dfrac{\|\hat A-A_0\|_F^2}{m_1m_2} \gtrsim \dfrac{r \max(m_1,m_2)}{n}\right)\geq C,
\end{equation*}
where $C\in (0,1)$ is a constant, and the infimum and supremum are taken over all the possible estimators and the parameter space $\{A_0\in \mathbb{R}^{m_1\times m_2}: \text{rank}(A_0)\leq r, \|A_0\|_{\infty}\leq a\}$ respectively. The logarithmic dimension factor $\log (m_1+m_2)$, as the gap between the upper and lower bounds, exists in other matrix completion settings as well. See, for instance,  \cite{klopp2017robust} for corrupted matrix completion,  \cite{yu2024low} for matrix completion with heavy tailed noise. 

By our analysis, this gap arises essentially from the spectral norm analysis of some random matrices.
In matrix completion, we frequently need to control the spectral norms of random matrices of the form $\frac{1}{n}\sum_{i=1}^n \zeta_i X_i$, where $\zeta_i$ are bounded or light-tailed random variables.
The use of traditional concentration inequalities in this context typically introduces a logarithmic dependence on the dimension.  
This limitation is not unique to matrix completion---it arises in a variety of statistical and mathematical problems and has garnered significant attention in recent years.
Notably, \cite{bandeira2023matrix} and \cite{brailovskaya2024universality} proposed sharp new concentration inequalities that can successfully remove the logarithmic dependence in various settings.
These results will serve as key tools in our paper.

Another point worth mentioning is that there is an alternative sampling model in matrix completion. In this model, there are Bernoulli random variables $\eta_{ij}$ indicating whether the entry of $A_0$ at location $(i,j)$ is observed. Under this assumption, the observations can be written as:
\begin{equation*}
\mathcal{O} = \{a_{0,ij} + \xi_{ij} : \eta_{ij} = 1,\  1\leq i\leq m_1 ,1\leq j \leq  m_2\},
\end{equation*}
where $\xi_{ij}$ represents the noise at location $(i,j)$ and $A_0=(a_{0,ij})$.
This sampling model is often referred to as \emph{sampling without replacement} because each entry can be selected at most once. 
A rich body of literature has investigated this model \citep{candes2010matrix,candes2012exact,keshavan2009matrix,chen2015fast,chen2020noisy}. As a contrast, the sampling strategy considered in our paper is known as \emph{sampling with replacement}, as it allows repeated observations of the same entry.
Since $n$ is often assumed to be much smaller than the matrix dimension $m_1\times m_2$, the number of repeated observations could be very small. Despite different sampling strategies, the theoretical results derived in both models are often compared side by side \citep{chen2015fast,chen2020noisy}. 
For instance, some works derive bounds of the following form \citep{chen2015fast}:
\begin{equation}\label{inequality: convergence rate of bernoulli model}
 \quad \frac{\|\widehat{A} - A_0\|_F^2}{m_1 m_2} \lesssim \frac{r\max(m_1,m_2)}{m_1 m_2 p},
\end{equation}
under the non-replacement model with $\eta_{ij}$ following $  \text{Bernoulli}(p)$. Here, as the expected sample size, $m_1m_2p$ plays a role as $n$ in the with-replacement model.
Compared to (\ref{inequality: convergence rate of bernoulli model}), the extra $\log(m_1+m_2)$ factor in (\ref{inequality: convergence rate of replacement model}) is a common critique of algorithms developed under the with-replacement model \citep{chen2015fast,chen2020noisy}.
Therefore, removing this logarithmic factor can help significantly defend the theoretical validity of the algorithms \citep{koltchinskii2011nuclear,negahban2012restricted,klopp2014noisy}.

\subsection*{Our contribution}
In this paper, we revisit three estimators that have been studied in other research works:
\begin{itemize}
\item The unknown matrix is of low rank and the noise is heavy tailed. 
The estimator in \cite{yu2024low} is designed for the case that the noise is only assumed to have  finite second moment.
\item  The unknown matrix is of low rank and the noise is sub-Gaussian. Nuclear norm penalized least squares is a common approach under this setting \citep{koltchinskii2011nuclear,negahban2012restricted,klopp2014noisy}. We consider the estimator proposed in \cite{klopp2014noisy} to illustrate our new results.
 \item  The unknown matrix is of low rank and the noise is sub-Gaussian with unknown variance. 
 The parameter tuning in matrix completion often requires prior knowledge of the noise variance.
\cite{klopp2014noisy} proposes an estimator to solve such a problem.
\end{itemize}
All the previous results on the convergence rates of the above three estimators involve the logarithmic dimension factor.
We remove the dimensional factor for all the three estimators, thereby proving their minimax rate optimality. 

The remainder of the paper is organized as follows.
In Section \ref{section: main}, we present our main results. 
Section \ref{section: proofs} is dedicated to the proofs.
We give some concluding remarks in Section \ref{section: discussion}. 

\section{Main Results}\label{section: main}
In this section, we present new upper bounds for the three estimators mentioned in Section \ref{section: intro}. 
In particular, compared to the existing bounds, the new bounds do not involve the dimension factor $\log d$ and hence match with the minimax lower bounds. 

 We first collect some  notations that are used throughout the paper. We denote $M=\max(m_1,m_2), m=\min(m_1,m_2), d= m_1+m_2$. We write $(\log d)^\alpha$ as $\log^{\alpha} d$  for aesthetic reasons. $C, C_1, C_2,\ldots$ are positive constants whose value may vary at each occurrence.

Recall our observations are independent noised entries sampled from $A_0\in \Rb^{m_1\times m_2}$:
\begin{equation*}
    Y_i=\la X_i, A_0\ra +\xi_i, \quad i=1,\ldots, n.
\end{equation*}
The sampling distribution of $X_i$ is denoted by 
\begin{equation*}
    P_{jk}=\Pb(X_i=e_j(m_1)e_k^T(m_2)), \quad 1\leq j\leq m_1, 1\leq k\leq m_2.
\end{equation*}
Throughout the paper,  we assume 
\begin{align*}
    \text{rank}(A_0)\leq r,\\
    \|A_0\|_{\infty}\leq  a.
\end{align*}

Regarding the distribution of $X_i$, we consider a general sampling distribution.
\begin{assumption}\label{assumption:X_i}
$X_i$'s distribution satisfies
\begin{align*}
 (1)~&\max_j \sum_{k=1}^{m_2} P_{jk}\leq \frac{L_2}{m},\\
 (2)~&\max_k \sum_{j=1}^{m_1} P_{jk}\leq \frac{L_2}{m},\\
 (3)~&\min_{j,k} P_{jk}\geq \frac{1}{\mu m_1m_2},\\
  (4)~&\max_{j,k} P_{jk}\leq \frac{L_3}{m\log^3 d},
  \end{align*}
where  $L_2, L_3\geq 0$ are constants, and $\mu\geq 1$ is allowed to change with the sample size or dimension. 
\end{assumption}

Conditions $(1)$-$(3)$ of Assumption \ref{assumption:X_i} are the same as in \cite{klopp2014noisy}. $L_2$ is introduced to control the row and column sampling probabilities, preventing them from being too large. Condition (4) requires that the sampling doe not extremely concentrate on a few entries. Note that Condition (1) or (2) already implies $\max_{j,k}P_{jk}\lesssim \frac{1}{m}$ which only differs from Condition (4) by a logarithmic factor. Hence, (4) is a very mild condition.

In following subsections, we introduce our $\log d$-free error bounds.

\subsection{Matrix completion with heavy tailed noise}\label{subsec:hv}
We start with our improvement on a recent advance in matrix completion \citep{yu2024low}.
Heavy-tailed data is ubiquitous in many fields such as finance, microeconomics and biology. To address heavy-tailed noise in the context of matrix completion, several estimators have been proposed. 
Assuming that  the second moment of the noise is finite, \cite{minsker2018sub} proposes a two-step procedure, where a truncated-type estimator $\tilde A$ is obtained first and then a nuclear norm penalized regression is performed on $\tilde A$.
\cite{fan2021shrinkage} also proposes a two-step procedure while a stronger moment condition $\Eb[|\xi_i|^{k}]<\infty$ for $k>2$ is required.
\cite{elsener2018robust} considers a class of loss functions including Huber loss function. 
They assume the distribution of the noise is symmetric and some additional conditions hold.
In particular, \cite{yu2024low} only assumes  that the noise variables have finite second moments and still derives strong properties for the estimator under study. The estimator proposed in their paper is based on the Huber loss function, which is one of the fundamental tools for addressing  heavy tailed noise.

We now formally state our assumption on the distribution of $\{\xi_i\}_{i=1}^n$. We only assume the second moment is finite.
\begin{assumption}\label{assumption: mean var}
Given $X_i$, the noise variables satisfy
\begin{equation*}
\Eb[\xi_i|X_i]=0    
\end{equation*}
and their second moments are uniformly bounded
\begin{equation*}
 \Eb[\xi_i^2|X_i]\leq \sigma^2.   
\end{equation*}
\end{assumption}

To deal with the heavy tailed noise, we introduce the following function 
\begin{align*}
l_{\tau}(x)=\begin{cases}
\frac{x^2}{2}, ~~|x|\leq \tau,\\
\tau|x|-\frac{\tau^2}{2} , ~~|x|>\tau,\\
\end{cases}
\end{align*}
and Huber loss is defined as $\frac{1}{n}\sum_{i=1}^n l_{\tau}(Y_i-\langle X_i, A\rangle)$. 
The estimator to be studied is
\begin{align}\label{estimator: heavy tail}
\hat{A}_H=\argmin_{\Vert A\Vert_{\infty}\leq a} \left\{ \dfrac{1}{n}\sumn l_{\tau}(Y_i-\la X_i, A\ra)+\lambda\Vert A\Vert_*\right\}.
\end{align}
We obtain the following new convergence rate for $\hat{A}_H$.

\begin{theorem}\label{theorem: new heavy tail}

Recall our basic  assumption and notation: $\|A_0\|_\infty\leq a$, $\text{rank}(A_0)\leq r$, $d=m_1+m_2$, $m=\min(m_1,m_2)$ and $M=\max(m_1,m_2)$, where $A_0\in \mathbb{R}^{m_1\times m_2}$.\\

\textbf{Part I}:
Let Assumptions \ref{assumption:X_i}  and  \ref{assumption: mean var} hold. Suppose  $n\geq 160m\log^4 d$ and $\xi_i|X_i$ are symmetric.  There exist constants  $C_1$\footnote{The constant in the tuning parameter $\lambda$ can vary within a certain range. This is true for other theorems in the paper as well.} and $ C_2$ such that 
with 
 \begin{align*}
  &\lambda=C_1\max(\sigma,a)\sqrt{\frac{1}{nm}},\\
  &\tau= \frac{\max(\sigma,a)}{\log^2 d}\sqrt{\frac{n}{m}},
 \end{align*}
 the following bound holds  with probability at least $1-\frac{2}{d}$,
\begin{equation}\label{inequality: new heavy tail}
\dfrac{\|\hat{A}_H-A_0\|^2_F}{m_1m_2}  \leq C_2\frac{\mu^2\max(a^2,\sigma^2)rM}{n}.  
\end{equation}
Here, $C_1$ and $C_2$ depend on the constants $L_2, L_3$ from Assumption \ref{assumption:X_i}.

\textbf{Part II}: Let Assumption \ref{assumption: mean var} hold  and the sampling be uniform,  $P_{ij}=\frac{1}{m_1m_2}$. 
Suppose   $n\geq 160m\log^5 d$ and $M\geq m\log^4d$.  There exist universal constants $C_1$ and $ C_2$ such that with the tuning parameters 
 \begin{align*}
  &\lambda=C_1\max(\sigma,a)\sqrt{\frac{1}{nm}},\\
  &\tau= \frac{\max(\sigma,a)}{\log^2 d}\sqrt{\frac{n}{m}},
 \end{align*}
the following bound holds with probability at least $1-\frac{3}{d}$,
\begin{equation*}
\dfrac{\|\hat{A}_H-A_0\|^2_F}{m_1m_2}  \leq C_2\frac{\mu^2\max(a^2,\sigma^2)rM}{n}.  
\end{equation*}

\end{theorem}

\cite{yu2024low} provides the following theorem.
\begin{theorem}\label{theorem: heavy tail}
Let Assumption  \ref{assumption: mean var} hold, and suppose  $P_{jk}=\frac{1}{m_1m_2}$.
Assume \(n \ge C_1\, m \log d\) for some universal constant $C_1$. 
There exist universal constants $\{C_i\}_{i=2}^4$ such that 
with 
\begin{align*}
    &\lambda=C_2\max(\sigma, a)\sqrt{\frac{\log d}{nm}},\\
    &\tau=C_3\max(\sigma, a) \sqrt{\frac{n}{m\log d}},
\end{align*}
the following holds with probability at least $1-\frac{3}{d}$,
\begin{equation*}
\dfrac{\|\hat{A}_H-A_0\|_F^2}{m_1m_2}\leq C_4  \max\left\{\max(a^2,\sigma^2) \dfrac{rM\log d}{n},  a^2 \sqrt{\frac{\log d}{n}}\right\}.   
\end{equation*}
\end{theorem}

Below we provide a comparison between Theorem~2.1 and Theorem~2.2. 
\begin{enumerate}
\item Our new result eliminates the $\log d$ factor in the convergence rate. It is achieved by utilizing new advanced concentration inequalities in \cite{brailovskaya2024universality}. See our Lemma \ref{lemma: sharp spectral} for a detailed analysis. \cite{koltchinskii2011nuclear} provides the following minimax lower bound in the case of gaussian noise and uniform sampling distribution:
\begin{equation}\label{inequality: minimax lower}
    \inf_{\hat{A}}\sup_{A_0} \Pb\left(\dfrac{\|\hat{A}-A_0\|_F^2}{m_1m_2}\geq C_1  \dfrac{\min(\sigma^2,a^2) rM}{n}\right)\geq C_2,
\end{equation}
where $C_1,C_2$ are universal constants, and the infimum and supremum are taken over all possible estimators and rank-$r$ matrices respectively. Combining the new bound \eqref{inequality: new heavy tail} with the minimax lower bound \eqref{inequality: minimax lower} shows that the removal of the dimension factor $\log d$ establishes the minimax rate optimality of the estimator \eqref{estimator: heavy tail}.

\item Theorem \ref{theorem: new heavy tail} allows for a more general sampling distribution while Theorem \ref{theorem: heavy tail} requires uniform distribution.
We also eliminate  the nuisance term $O(\sqrt{(\log d)/n})$ from the convergence rate, via a different peeling argument \citep{wainwright2019high} in the analysis of certain empirical processes.

\item Our analysis reveals a more precise choice for the tuning parameter $\lambda$: the optimal choice for $\lambda$ should be of the order $O(\sqrt{1/(nm)})$ instead of $O(\sqrt{(\log d)/(nm)})$, which sheds light on the practice of parameter tuning.

\item 

Theorem \ref{theorem: new heavy tail} requires   $n\geq Cm\log^4 d$ (Part I) or $n\geq Cm\log^5 d$ (Part II) to remove the $\log d$ factor, while Theorem \ref{theorem: heavy tail} requires $n\geq Cm\log d$. This slightly stronger condition results from leveraging the sharp concentration inequalities in \cite{brailovskaya2024universality}. 
We emphasize that by the previous techniques, the $\log d$ free bounds cannot  be obtained even if one assumes $n\geq Cm \log^4 d$.
We defer a more detailed discussion on this issue to the remarks after Theorem \ref{theorem: klopp lasso}. 

\item

In our analysis, we consider some random matrices constructed from truncated
versions of $\{\xi_i\}_{i=1}^n$. When the noise follows a general
(not necessarily symmetric) distribution, the random matrices may not be mean zero and additional bias terms are introduced. 
Controlling the bias terms requires the conditions that the sampling is uniform and $M\geq m\log^4d$.

Moreover, assuming that  $\xi_i$ have  finite moments of order $2 + \kappa$ for some $\kappa > 0$, a $\log d$-free bound can be established 
without restrictions on the matrix dimensions. In this case, the following sample size condition is required
\[
n \geq 
C m\log^{4+\frac{4}{\kappa}} d.
\]
See our analysis in the proof of Theorem \ref{theorem: new heavy tail} Part II for details.
\end{enumerate}

\subsection{Matrix completion with known variance}\label{subsec:known}
The setting that the noise have distributions with exponential decay is most studied, see, e.g., \cite{koltchinskii2011nuclear, negahban2012restricted, klopp2014noisy}. In this section, we assume the noise $\{\xi_i\}_{i=1}^n$ are sub-Gaussian.

\begin{assumption}\label{assumption:noise-subgaussian}
The noise variables $\xi_i$ satisfy $\Eb[\xi_i|X_i]=0$. Its overall variance satisfies $\Eb[\xi^2_i]=\sigma^2$ and given $X_i$, it holds uniformly
\begin{equation*}
 \Eb[\exp(t\xi_i/\sigma)|X_i]\leq \exp(L_1^2t^2), \forall t\in\mathbb{R},
\end{equation*}
where $L_1> 0$ is a constant.
\end{assumption}

Under the assumption of  light tailed noise,  an estimator proposed in \cite{klopp2014noisy}  is:
\begin{equation}\label{estimator:lasso}
\hat{A}=\argmin_{\|A\|_\infty\leq a}  \left\{ \dfrac{1}{n}\sum_{i=1}^n (Y_i-\langle X_i,A\rangle)^2+\lambda \|A\|_*  \right\}.
\end{equation}
We now present our new result:
\begin{theorem}\label{theorem: new lasso}
 Let Assumptions   \ref{assumption:X_i} and \ref{assumption:noise-subgaussian} hold. 
 Assume that
 $n\geq C_1m\log^4 d~(\log d+\log n)$.  There exist $C_2$ and $C_3$ such that 
 with 
 \begin{equation*}
\lambda=C_2\sigma\sqrt{\frac{1}{nm}} ,
 \end{equation*}
the following bound holds with probability at least $1-\frac{2}{d}$
\begin{equation}\label{inequality: new lasso}
  \dfrac{\|\hat{A}-A_0\|_F^2}{m_1m_2} \leq  \dfrac{C_3\mu^2\max(a^2,\sigma^2)rM}{n}.  
\end{equation}
Here, $C_1$ is a universal constant, and $C_2, C_3$ depend on the constants $\{L_i\}_{i=1}^3$ from Assumptions \ref{assumption:X_i} and \ref{assumption:noise-subgaussian}.
\end{theorem}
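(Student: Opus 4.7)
The plan is to follow the penalized least-squares framework developed in \cite{klopp2014noisy}, replacing the two appearances of classical matrix Bernstein --- in the tuning of $\lambda$ and in the restricted strong convexity step --- by the sharp matrix concentration inequalities of \cite{brailovskaya2024universality} already distilled into Lemma~\ref{lemma: sharp spectral} earlier in the paper. The entry point is the basic inequality coming from optimality of $\hat A$:
\begin{equation*}
\frac{1}{n}\sum_{i=1}^n \langle X_i, \hat A - A_0\rangle^2 \leq \frac{2}{n}\sum_{i=1}^n \xi_i \langle X_i, \hat A - A_0\rangle + \lambda\bigl(\|A_0\|_* - \|\hat A\|_*\bigr).
\end{equation*}
Via spectral--nuclear duality, the stochastic term is at most $2\|\frac{1}{n}\sum_{i=1}^n \xi_i X_i\|\cdot\|\hat A - A_0\|_*$; as long as $\lambda$ dominates $4\|\frac{1}{n}\sum_{i=1}^n\xi_i X_i\|$ with high probability, the standard projection decomposition of $\hat A - A_0$ against the singular subspaces of $A_0$ produces a cone condition $\|P_{A_0}^\perp(\hat A - A_0)\|_* \leq 3\|P_{A_0}(\hat A - A_0)\|_*$ and consequently $\|\hat A - A_0\|_* \apprle \sqrt{r}\,\|\hat A - A_0\|_F$.

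The first and key improvement is the sharp spectral bound on $\frac{1}{n}\sum \xi_i X_i$. Under Assumptions \ref{assumption:noise-subgaussian} and \ref{assumption:X_i}(1)--(2) each summand is mean-zero with a single sub-Gaussian non-zero entry, and the matrix variance parameter is of order $\sigma^2/(nm)$. Classical non-commutative Bernstein produces $O(\sigma\sqrt{\log d/(nm)})$, but Lemma~\ref{lemma: sharp spectral} (an application of the Brailovskaya--van Handel inequality) instead yields $\|\frac{1}{n}\sum_{i=1}^n \xi_i X_i\| \apprle \sigma/\sqrt{nm}$ with probability at least $1-1/d$, provided the free-probability and cubic-moment correction terms are negligible. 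Assumption \ref{assumption:X_i}(4), together with the sample size condition $n\geq C_1 m\log^4 d\,(\log d+\log n)$, is exactly what ensures this negligibility in the sub-Gaussian setting (the $\log n$ enters through a truncation step needed before the concentration inequality can be applied entry-by-entry). This legitimises the optimal tuning $\lambda \asymp \sigma/\sqrt{nm}$ announced in the statement.

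The second improvement is a restricted strong convexity statement of the form $\frac{1}{n}\sum \langle X_i,\Delta\rangle^2\geq \frac{1}{2}\|\Delta\|_{w(F)}^2$ uniformly over $\Delta=\hat A-A_0$ satisfying the cone condition above and $\|\Delta\|_\infty\leq 2a$. I would obtain it by symmetrization--contraction, which reduces the task to a spectral bound on the Rademacher-weighted sum $\frac{1}{n}\sum_{i=1}^n\epsilon_iX_i$ --- again handled by the sharp matrix concentration --- combined with a peeling device in the spirit of \cite{wainwright2019high} so that no additive $\sqrt{\log d/n}$ slack survives. Invoking Assumption \ref{assumption:X_i}(3), which gives $\|\Delta\|_F^2\leq \mu m_1m_2\|\Delta\|_{w(F)}^2$, and chaining the pieces together,
\begin{equation*}
\frac{\|\hat A-A_0\|_F^2}{\mu m_1m_2}\apprle \|\hat A-A_0\|_{w(F)}^2 \apprle \lambda\sqrt{r}\,\|\hat A-A_0\|_F \apprle \frac{\sigma\sqrt{r}}{\sqrt{nm}}\,\|\hat A-A_0\|_F,
\end{equation*}
whence the claimed bound follows after rearrangement; the factor $\mu^2$ (rather than $\mu$) appears because Assumption \ref{assumption:X_i}(3) is invoked twice --- once to bound $\|\Delta\|_*$ by $\sqrt{r\mu m_1m_2}\,\|\Delta\|_{w(F)}$, and once to convert the final weighted Frobenius norm back to the ordinary one.

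The main technical obstacle is the uniformity in the RSC step: the relevant cone is infinite-dimensional, the noise is merely sub-Gaussian rather than bounded, and any intermediate tail bound that is lossy in the dimension would reinstate the very $\log d$ factor we are trying to remove. The truncation/peeling combination is precisely what forces the slightly inflated sample-size requirement $n\gtrsim m\log^4 d\,(\log d+\log n)$; once the sharp spectral estimates are successfully applied to both $\frac{1}{n}\sum \xi_iX_i$ and the symmetrised process $\frac{1}{n}\sum\epsilon_iX_i$, the remaining manipulations are algebraic.
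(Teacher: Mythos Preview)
Your proposal is correct and follows essentially the same route as the paper: the Klopp basic inequality and cone condition, the sharp spectral bound from Lemma~\ref{lemma: sharp spectral} applied both to $\frac{1}{n}\sum\xi_iX_i$ (to tune $\lambda$) and to the Rademacher sum $\frac{1}{n}\sum\epsilon_iX_i$ (inside the RSC step), with the $\|\cdot\|_\infty/\|\cdot\|_*$ peeling of \cite{wainwright2019high} replacing Klopp's Frobenius peeling. One small inaccuracy: the RSC lemma the paper actually proves (Lemma~\ref{lemma:rsc L2}) does retain lower-order offset terms of size $\|A\|_\infty\sqrt{\mu\log d/n}$ and $\mu\|A\|_\infty^2\log d/n$, which are then absorbed by AM--GM and shown to be dominated by the leading $rM/n$ term---so your claim that ``no additive slack survives'' is slightly too strong, though harmless for the final rate.
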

To compare with existing bounds, we mention the following theorem.
\begin{theorem}[Theorem 7 in \cite{klopp2014noisy}]\label{theorem: klopp lasso}
Let Assumption \ref{assumption:noise-subgaussian},  Conditions (1)-(3) in  Assumption \ref{assumption:X_i}  hold. 
Assume that $n\geq C_1m\log^2 d$. 
There exist $\{C_i\}_{i=1}^3$ such that
with 
\begin{equation*} \lambda=C_2\sigma\sqrt{\dfrac{\log d}{nm}}, 
\end{equation*}
the following bound holds with probability $1-\frac{3}{d}$:
\begin{equation*}
 \dfrac{\|\hat{A}-A_0\|_F^2}{m_1m_2}\leq   C_3\max \left\{\dfrac{\mu^2\max(a^2,\sigma^2)rM\log d}{n},a^2\mu\sqrt{\frac{\log d}{n}} \right\}. 
\end{equation*}
Here, $C_1$ is a universal constant, and $C_2, C_3$ depend on the constants $L_1, L_2$ from Assumptions \ref{assumption:X_i} and \ref{assumption:noise-subgaussian}.
\end{theorem}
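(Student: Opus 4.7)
The plan is to reproduce the classical nuclear-norm penalized least squares argument of Klopp. The starting point is the basic inequality that follows from the feasibility of $A_0$ and the optimality of $\hat A$,
\begin{equation*}
\sumi \la X_i, \hat A - A_0 \ra^2 \leq 2\la \sxix, \hat A - A_0\ra + \lambda\bigl(\|A_0\|_* - \|\hat A\|_*\bigr).
\end{equation*}
Trace duality controls the linear term by $2\|\sxix\|\cdot\|\hat A - A_0\|_*$, and choosing $\lambda$ to dominate twice a high-probability upper bound on $\|\sxix\|$, combined with decomposability of the nuclear norm along the SVD of $A_0$, localizes the error $\Delta = \hat A - A_0$ in the cone $\|P_{A_0}^{\perp}(\Delta)\|_* \leq 3\|P_{A_0}(\Delta)\|_*$, which in turn yields $\|\Delta\|_* \leq 4\sqrt{2r}\,\|\Delta\|_F$. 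These are standard SVD manipulations.

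The first stochastic ingredient is a spectral norm bound on $\sxix$. Under Assumption~\ref{assumption:noise-subgaussian} and Conditions (1)--(2) of Assumption~\ref{assumption:X_i}, each $\xi_iX_i$ is sub-Gaussian with operator norm of order $\sigma$ and row/column variance proxy of order $\sigma^2/m$. A matrix Bernstein inequality (combined with a standard sub-Gaussian truncation) therefore gives
\begin{equation*}
\bigl\|\sxix\bigr\| \;\lesssim\; \sigma\sqrt{\frac{\log d}{nm}}
\end{equation*}
with probability at least $1-d^{-1}$ under the sample size condition $n \geq C_1 m\log^2 d$. This is the only source of the $\sqrt{\log d}$ in the prescribed $\lambda$ and of the dominant $\log d$ factor in the final bound.

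The second and technically most delicate ingredient is restricted strong convexity for the empirical squared loss. Via symmetrization, contraction applied to the quadratic loss truncated at $2a$, and matrix Bernstein applied to $\frac{1}{n}\sum_i \epsilon_i X_i$, one shows that, on an event of high probability, uniformly over matrices $B$ with $\|B\|_\infty \leq 2a$ and $\|B\|_* \leq c\sqrt{r}\,\|B\|_F$,
\begin{equation*}
\sumi \la X_i, B \ra^2 \;\geq\; \tfrac{1}{2}\|B\|_{w(F)}^2 \,-\, C a^2\sqrt{\mu}\sqrt{\frac{M\log d}{n}}\,\|B\|_{w(F)} \,-\, \frac{C' a^2\log d}{n}.
\end{equation*}
A peeling argument over dyadic scales of $\|B\|_{w(F)}$ promotes this inequality to the desired uniform statement, and the surviving linear deviation is exactly the origin of the additive $a^2\mu\sqrt{\log d /n}$ remainder in the final bound.

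Putting the three ingredients together produces a quadratic inequality in $\|\Delta\|_{w(F)}$; combining it with the lower bound $\|\Delta\|_{w(F)}^2 \geq (\mu m_1 m_2)^{-1}\|\Delta\|_F^2$ from Condition (3) of Assumption~\ref{assumption:X_i} and solving the quadratic yields a bound on $\|\Delta\|_F^2/(m_1 m_2)$ that is the maximum of $\mu^2 \max(a^2,\sigma^2) r M \log d / n$ (from the penalty/cone contribution) and $a^2 \mu \sqrt{\log d / n}$ (from the empirical-process remainder), matching the statement of the theorem. I expect the main obstacle to be the uniform restricted strong convexity step: the peeling requires a careful match between the deviation threshold and $\|\Delta\|_{w(F)}$, and it is precisely the traditional matrix Bernstein used here that introduces the extra $\sqrt{\log d}$ factor which the authors' sharper Theorem~\ref{theorem: new lasso} is designed to eliminate.
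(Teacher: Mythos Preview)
The paper does not prove this theorem. Theorem~\ref{theorem: klopp lasso} is quoted verbatim from \cite{klopp2014noisy} solely as a benchmark; the authors' own contribution for the least-squares estimator is Theorem~\ref{theorem: new lasso}, whose proof (in Section~\ref{section: proofs}) replaces the classical matrix Bernstein step by the sharper concentration of Lemma~\ref{lemma: sharp spectral} and replaces Klopp's Frobenius-norm peeling by the $\|\cdot\|_\infty$/$\|\cdot\|_*$ peeling of Lemma~\ref{lemma:rsc L2}. So there is nothing in the present paper to compare your proposal against.

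That said, as a reconstruction of Klopp's original argument your outline is broadly right but the restricted-strong-convexity step is misstated in a way that would not reproduce the claimed nuisance term. Klopp's RSC (obtained by a Frobenius-norm based peeling, as the present paper notes in the remarks after Theorem~\ref{theorem: klopp lasso}) has the form
\[
\sumi \la X_i, B\ra^2 \;\geq\; \tfrac{1}{2}\|B\|_{w(F)}^2
\quad\text{whenever}\quad \|B\|_{w(F)}^2 \;\gtrsim\; a^2\sqrt{\tfrac{\log d}{n}},
\]
i.e.\ a threshold/additive-constant deviation of order $a^2\sqrt{\log d/n}$, not the linear deviation $Ca^2\sqrt{\mu}\sqrt{M\log d/n}\,\|B\|_{w(F)}$ that you wrote. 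With your linear term, solving the resulting quadratic would produce a remainder of order $a^4\mu^2 M\log d/n$ in $\|\Delta\|_F^2/(m_1m_2)$, which neither matches the stated $a^2\mu\sqrt{\log d/n}$ nor is dominated by the main term. The correct additive-constant form feeds directly through the bound $\|\Delta\|_F^2/(m_1m_2)\leq \mu\|\Delta\|_{w(F)}^2$ to give exactly the $a^2\mu\sqrt{\log d/n}$ nuisance term. The rest of your sketch (basic inequality, cone condition via decomposability, matrix Bernstein for $\|\sxix\|$ under Conditions (1)--(2)) is faithful to Klopp's argument.
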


Comparing Theorems \ref{theorem: new lasso} and \ref{theorem: klopp lasso}, we make several remarks:
\begin{enumerate}
 \item  Theorem \ref{theorem: new lasso} eliminates a $\log d$ factor in the convergence rate.
 Comparing (\ref{inequality: new lasso}) and (\ref{inequality: minimax lower}), the removal of the $\log d$ factor establishes the minimax optimality of estimator (\ref{estimator:lasso}). 
 
  \item Theorem \ref{theorem: new lasso} also provides the correct order of tuning parameter $\lambda$ for (\ref{estimator:lasso}).
  The old one is $O(\sqrt{\log d/(nm)})$ while the correct one in Theorem \ref{theorem: new lasso} is $O(\sqrt{1/(nm)})$.
   The $\log d$ difference can be significant in high dimensional settings.

\item  Theorem \ref{theorem: klopp lasso} involves an additional nuisance term $O(\sqrt{\log d/n})$, similarly to Theorem \ref{theorem: heavy tail}. In the high-dimensional regime where $n\leq m_1m_2$, it is straightforward to verify that the first term–the desirable term–is the larger one. However, in less challenging scenarios in which $n$ is sufficiently large, the nuisance term dominates. This limitation arises from a Frobenius norm based peeling argument that \cite{klopp2014noisy} used to prove restricted strong convexity. In contrast, we have adopted a different peeling scheme introduced in \cite{wainwright2019high}, which peels certain matrix space by the infinity norm and the nuclear norm. As a result, our new bound does not include the nuisance term $O(\sqrt{\log d/n})$.

\item Our new bound requires a slightly stronger condition on $n$. The $\log n$ term arises from the truncation of the sub-Gaussian noise, in order to apply the matrix concentration inequalities from \cite{brailovskaya2024universality}. In the most interesting regime where $n\leq m_1m_2$, the sample size requirement in Theorem \ref{theorem: new lasso} becomes $n\geq Cm\log^5 d$ which is in the typical form of $n\geq m\text{Poly}(\log d)$ in the matrix completion literature. Under both with-replacement and without-replacement settings, the order of $\text{Poly}(\log d)$ often vary with problem assumptions and methods used; see, for example, \cite{koltchinskii2011nuclear,klopp2014noisy,chen2020noisy}. In our results, the leverage of the concentration inequalities \citep{brailovskaya2024universality} causes a small change of the order of $\text{Poly}(\log d)$. This fluctuation is acceptable in light of the substantial gain in sharpness of the resulting bounds. We emphasize that previous concentration inequalities cannot yield bounds as sharp as ours, even when the sample size is fixed at the same level as in our results.

\item 

Compared with Theorem \ref{theorem: klopp lasso}, Theorem \ref{theorem: new lasso} additionally requires Condition~(4) 
in Assumption \ref{assumption:X_i}. Condition~(4) rules out extreme cases where the sampling 
probabilities for certain rows or columns concentrate on only a few entries. 
As discussed following Assumption \ref{assumption:X_i}, this condition is mild. It is needed to 
control the spectral norms of key random matrices via the concentration 
inequalities of \cite{brailovskaya2024universality}, which allows us to obtain 
$\log d$-free bounds.

\end{enumerate}

\subsection{Matrix completion with unknown variance of the noise}\label{subsec:unknown}
\label{mc:unk}
The choice of $\lambda$ in Theorem \ref{theorem: klopp lasso} is dependent on the variance of the noise. In practice, $\sigma$ is often unknown. To address this issue, \cite{klopp2014noisy} proposes the following square-root lasso type estimator and studies its convergence rate.

\begin{equation}\label{estimator: sq lasso}
  \hat{A}_S=\argmin_{\|A\|_\infty
  \leq  a} \left\{ \sqrt{\dfrac{1}{n}\sum_{i=1}^n (Y_i-\langle X_i, A\rangle)^2} +\lambda \|A\|_*  \right\} .
\end{equation}
We obtain a new result for (\ref{estimator: sq lasso}).
\begin{theorem} \label{theorem: new sq lasso}
 Let Assumptions \ref{assumption:X_i} and  \ref{assumption:noise-subgaussian} hold. Suppose $n\geq C_1m \log^4 d~(\log d+\log n)$. 
There exist $\{C_i\}_{i=2}^4$ such that  
 with 
 \begin{equation*}
   \lambda=C_2\sqrt{\frac{1}{nm}},  
 \end{equation*}
  the following bound holds with probability at least $1-\frac{3}{d}$,
\begin{equation}\label{inequality: new sq lasso}
 \dfrac{\|\hat{A}_S-A_0\|_F^2}{m_1m_2}\leq \dfrac{C_3\mu^2\max(a^2,\sigma^2)rM}{n},  
\end{equation} 
provided that   $n\geq C_4\mu Mr $. Here, $C_1$ is a universal constant,  and $\{C_i\}_{i=2}^4$ depend on $\{L_i\}_{i=1}^3$,  where $L_i$'s are constants from Assumptions \ref{assumption:X_i} and \ref{assumption:noise-subgaussian}.
\end{theorem}
We can compare the above bound  with the one obtained in \cite{klopp2014noisy}.
\begin{theorem}[Theorem 10 in \cite{klopp2014noisy}]\label{theorem:squarerootlasso}
 Let Assumption \ref{assumption:noise-subgaussian} and   Conditions (1)-(3) in Assumption \ref{assumption:X_i}  hold.  There exist $\{C_i\}_{i=1}^3$ such that with
 \begin{equation*}
     \lambda= C_1\sqrt{\frac{\log  d}{nm}},
 \end{equation*}
the following upper bound holds with  probability $1-\frac{3}{d}-2\exp(-C_2n)$,
\begin{equation*}
  \dfrac{\|\hat{A}_S-A_0\|_F^2}{m_1m_2}\leq C_3 \max\left\{\dfrac{\mu^2\max(a^2,\sigma^2)  rM\log d}{n},a^2\mu\sqrt{\frac{\log d}{n}} \right\},  
\end{equation*}
provided that $n\geq 8C_1\mu rM\log d$. Here $C_1, C_3$ depend on $L_1, L_2$ and $C_2$ depends on $L_1$, where $L_1, L_2$ are constants from Assumptions \ref{assumption:X_i} and \ref{assumption:noise-subgaussian}.
\end{theorem}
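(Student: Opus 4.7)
The plan is to execute the standard square-root LASSO argument from \cite{klopp2014noisy}, adapted to the matrix completion setting with a weighted Frobenius geometry. Write $\hat\Delta=\As-A_0$, $L(A)=\frac1n\sum_{i=1}^n(Y_i-\la X_i,A\ra)^2$, $L_0=L(A_0)=\frac1n\sum_{i=1}^n\xi_i^2$, and $G_0=\frac1n\sum_{i=1}^n\xi_i X_i$. The first step is to linearize the square-root loss via optimality of $\As$: from $\sqrt{L(\As)}+\lambda\|\As\|_*\le\sqrt{L_0}+\lambda\|A_0\|_*$, the identity $\sqrt{a}-\sqrt{b}=(a-b)/(\sqrt{a}+\sqrt{b})$ with $\hat S:=\sqrt{L(\As)}+\sqrt{L_0}$, and the expansion $L(\As)-L_0=\frac1n\sum_{i=1}^n\la X_i,\hat\Delta\ra^2-2\la G_0,\hat\Delta\ra$, I arrive at the basic inequality
\[
\frac{1}{n}\sum_{i=1}^n \la X_i,\hat\Delta\ra^2+\lambda\hat S\bigl(\|\As\|_*-\|A_0\|_*\bigr)\le 2\la G_0,\hat\Delta\ra,
\]
so $\As$ obeys a LASSO-type inequality with data-driven regularization $\lambda\hat S$.

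The next step is to prove $\hat S\asymp\sigma$, which reduces the problem to a standard LASSO with effective penalty $\lambda\sigma$. I would use matrix Bernstein on $G_0$, whose summands $\xi_i X_i$ are independent rank-one sub-exponential random matrices, to get $\|G_0\|\le C\sigma\sqrt{(\log d)/(nm)}$ with probability at least $1-1/d$; this is exactly the $\sqrt{\log d}$ factor that later theorems of the paper eliminate. Separately, scalar sub-exponential concentration for $L_0$ gives $\sqrt{L_0}\in[\sigma/2,2\sigma]$ with probability at least $1-2\exp(-C_2 n)$. Combining with $\sqrt{L(\As)}\le\sqrt{L_0}+\lambda\|A_0\|_*$, the crude estimate $\|A_0\|_*\le a\sqrt{rm_1m_2}$, and the sample-size hypothesis $n\ge 8C_1\mu rM\log d$ (which forces $\lambda\|A_0\|_*\apprle\sigma$) yields $\hat S\asymp\sigma$ on the good event.

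Inserting $\|G_0\|\apprle\lambda\sigma$ and $\hat S\asymp\sigma$ back into the basic inequality, decomposability of the nuclear norm at $A_0$, $\|\As\|_*-\|A_0\|_*\ge\|P_{A_0}^\perp(\hat\Delta)\|_*-\|P_{A_0}(\hat\Delta)\|_*$, simultaneously produces the cone condition $\|P_{A_0}^\perp(\hat\Delta)\|_*\le C\|P_{A_0}(\hat\Delta)\|_*$, whence $\|\hat\Delta\|_*\apprle\sqrt{r}\|\hat\Delta\|_F$, and the residual bound $\frac1n\sum_{i=1}^n\la X_i,\hat\Delta\ra^2\apprle\lambda\sigma\sqrt{r}\|\hat\Delta\|_F$. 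To convert this empirical quadratic form into $\|\hat\Delta\|_F^2$, I invoke the restricted strong convexity lemma from \cite{klopp2014noisy}, proved by a Frobenius-norm peeling over shells of doubling radius: on a high-probability event, every $B$ in the restricted cone with $\|B\|_F^2\ge c\,a^2\mu m_1m_2\sqrt{(\log d)/n}$ satisfies $\frac1n\sum_{i=1}^n\la X_i,B\ra^2\ge\tfrac12\|B\|_{w(F)}^2\ge\tfrac{1}{2\mu m_1m_2}\|B\|_F^2$, the last step using Assumption \ref{assumption:X_i}(3). The dichotomy between $\|\hat\Delta\|_F^2$ above and below this threshold produces exactly the two branches of the final maximum: the RSC regime gives the main term $\mu^2\max(a^2,\sigma^2)rM\log d/n$, while the below-threshold regime gives the nuisance term $a^2\mu\sqrt{(\log d)/n}$.

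The main obstacle is the RSC lemma itself. Peeling by Frobenius norm requires uniform control of a symmetrized Rademacher process over shells of matrices with $\|B\|_\infty\le 2a$ and $\|B\|_*\apprle\sqrt{r}\|B\|_F$, together with Talagrand's inequality and a matrix Bernstein bound on $\|\frac1n\sum_{i=1}^n\epsilon_i X_i\|$—which is itself the second source of the $\log d$ factor. The additive $a^2\mu\sqrt{(\log d)/n}$ slack is precisely the smallest Frobenius shell that such a peeling scheme can resolve, so it cannot be removed within this framework. These two intrinsic losses are exactly what the present paper later eliminates in Theorem \ref{theorem: new sq lasso}, by replacing matrix Bernstein with the sharp inequalities of \cite{brailovskaya2024universality} and Klopp's Frobenius peeling with the infinity/nuclear peeling of \cite{wainwright2019high}.
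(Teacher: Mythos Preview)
The paper does not prove this theorem; it is quoted from \cite{klopp2014noisy} as a benchmark for the new Theorem~\ref{theorem: new sq lasso}, and no proof appears in Section~\ref{section: proofs}. Your sketch is a faithful reconstruction of Klopp's argument, and the deterministic pieces you list---cone condition under $\lambda\ge 3\|G_0\|/\|\xi\|_2$, residual bound of order $\lambda\sigma\sqrt r\,\|\hat\Delta\|_F$, Frobenius-peeling RSC, and the resulting dichotomy---are exactly what the present paper itself invokes from ``Appendix D in \cite{klopp2014noisy}'' when proving Theorem~\ref{theorem: new sq lasso}. One point of departure: rather than linearizing the square root through $\hat S=\sqrt{L(\As)}+\sqrt{L_0}$ and then bounding $\hat S$ from above, Klopp's derivation (as cited in the paper) produces the basic inequality directly in terms of $\sqrt{L_0}=\|\xi\|_2$, at the price of an additive $\tfrac14\|\hat\Delta\|_{w(F)}^2$ on the right; the sample-size hypothesis $n\ge 8C_1\mu rM\log d$ is precisely the condition $4\mu m_1m_2\lambda^2 r\le\tfrac14$ needed to absorb that term---not, as you wrote, to force $\lambda\|A_0\|_*\lesssim\sigma$. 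In fact the hypothesis only gives $\lambda\|A_0\|_*\lesssim a$, so your $\hat S$-route yields $\hat S\lesssim\max(a,\sigma)$ rather than $\hat S\lesssim\sigma$; this still delivers the final $\max(a^2,\sigma^2)$ rate, but the intermediate claim is inaccurate as stated.
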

Comparing the convergence rate  in Theorem  \ref{theorem:squarerootlasso} and the lower bound in  (\ref{inequality: minimax lower}), they are not matched due to the existence of  the $\log d$ factor. 
Our new bound in \eqref{inequality: new sq lasso} resolves the issue and shows that the estimator \eqref{estimator: sq lasso} is indeed minimax rate optimal. Regarding more comparisons between Theorems \ref{theorem: new sq lasso} and \ref{theorem:squarerootlasso}, we refer to remarks after Theorem \ref{theorem: klopp lasso} for similar comments. 

\section{Proofs}\label{section: proofs}

The matrix completion problem has been extensively studied in the literature, and its analytical framework is now well established. Before delving into the details of the proofs, we provide a discussion clarifying the relationship between our proofs and those in the existing literature, as well as highlighting the new ingredients of the current work.

The three estimators discussed in our paper share a similar structure.
To illustrate the main ideas,  we can write them in a unified form:
\begin{equation*}
    \Ah_G=\argmin_{\|A\|_\infty\leq a} \left\{ \Phi(A)+\lambda\|A\|_*\right\},
\end{equation*}
where $\Phi(\cdot)$ is a convex loss function.
A typical analysis proceeds in two steps:
\begin{itemize}
    \item[(1)] \emph{Derivation of  nearly low-rank structure}. One needs to show
\begin{equation*}
    \|\hat{A}_G-A_0\|_*\leq C_1\sqrt{r}\|\hat{A}_G-A_0\|_F,
\end{equation*}
which  holds under the condition
\begin{equation*}
    \lambda\geq C_2\|\nabla\Phi(A_0)\|,
\end{equation*}
where $\|\cdot\|$ denotes the spectral norm. $\|\nabla\Phi(A_0)\|$ is often related to the following class of random matrices
\begin{equation*}
    \left\|\sumi z_iX_i\right\|,
\end{equation*}
where $z_i$  are bounded or sub-Gaussian random variables. 
An appropriate choice of $\lambda$ requires  sharp non-asymptotic bounds on $\|\sumi z_iX_i\|$.

\vspace{0.1cm}

\item[(2)] \emph{Establishment of  restricted strong convexity}. One needs to show 
\begin{equation*}
    \la \nabla\Phi(\Ah_G)-\nabla\Phi(A_0), \Ah_G-A_0\ra\geq C\|\Ah_G-A\|^2_{w(F)}-\Upsilon,
\end{equation*}
where $\Upsilon$ is some error term and $\|A\|_{w(F)}=\sqrt{\sumij a^2_{ij}P_{ij}}$ for $A=(a_{ij})\in \mathbb{R}^{m_1\times m_2}$. This step requires bounding some empirical processes. And one will need to bound the following quantity (or its variants), 
\begin{equation*}
    \Eb\left[\left\|\sumi\epsilon_iX_i\right\|\right],
\end{equation*}
which is related to some Rademacher complexity. Here, $\{\epsilon_i\}_{i=1}^n$ are symmetric Bernoulli random variables independent of $\{X_i\}_{i=1}^n$.
\end{itemize}
Existing works adopt standard concentration inequalities (e.g., Theorem 6.17 in \cite{wainwright2019high}), obtaining
 \begin{equation}\label{b1}
  \left\|\sumi z_iX_i\right\|\lesssim \sqrt{\frac{\log d}{nm}},\quad 
  \Eb\left[\left\|\sumi\epsilon_iX_i\right\|\right]\lesssim \sqrt{\frac{\log d}{nm}}.
\end{equation}
The  $\log d$ factor in the sub-optimal convergence rate essentially results from the loose bounds (\ref{b1}).

To overcome this limitation, we leverage new advances in matrix concentration inequalities 
\cite{brailovskaya2024universality}, to obtain sharper bounds 
\begin{equation*}
    \left\|\sumi z_iX_i\right\|\lesssim \sqrt{\frac{1}{nm}}, \quad 
 \Eb\left[\left\|\sumi\epsilon_iX_i\right\|\right]\lesssim \sqrt{\frac{1}{nm}}.
\end{equation*}
The concentration inequalities in \cite{brailovskaya2024universality} are not directly applicable to our problems due to certain uniform boundedness assumptions. We employ a truncation scheme together with careful non-asymptotic calculations, to ensure a successful adaptation of these advanced concentration inequalities. The sharp bounds for the spectral norms are presented in Section \ref{section:spn}.

Another technical issue arises from bounding some empirical process deviations in Step (2). Commonly adopted peeling arguments (e.g., those in \cite{yu2024low}) often introduce an additional nuisance error term of order $O(\sqrt{(\log d)/n})$, which could be dominant if the sample size exceeds a threshold. In this paper,  inspired by \cite{wainwright2019high},  we develop a new peeling argument for the Huber loss function to reduce the error from $O(\sqrt{(\log d)/n})$ to $O((\log d)/n)$, so that the corresponding nuisance term becomes negligible without an upper bound constraint on the sample complexity. The same improvement applies to the quadratic loss, which follows as a corollary of the corresponding results for the Huber loss. Related results can be found in  Lemmas \ref{lemma: RSC Huber} and \ref{lemma:rsc L2}.

The sharp spectral norm analysis, together with other technical refinements, removes the $\log d$ factor in the convergence rate and establishes the minimax optimality.

In this section, $\{c_i\}_{i=0}^4$  are specific constants whose values do not change at each occurrence.

\subsection{Proof of Theorem \ref{theorem: new heavy tail}} \label{P2.1} 

In this section, the derivative of $l_\tau(\cdot)$ is denoted by $\phi_\tau(\cdot)$, where
\begin{equation*}
 \phi_\tau(x)=\begin{cases}
   x, |x|\leq \tau,\\
   \sign(x)\tau, |x|>\tau.
 \end{cases}   
\end{equation*}
We set
\begin{equation*}
    L(A)=\sumi l_\tau(Y_i-\la X_i,A\ra).
\end{equation*}

We now provide a general form of the convergence rate.
\begin{lemma}\label{lemma: general heavy tail}
Let Assumptions  \ref{assumption:X_i} and \ref{assumption: mean var} hold.
Suppose $n\geq 160m\log ^4 d$ and $\tau=\frac{\max(\sigma,a)}{\log^2 d}\sqrt{\frac{n}{m}}$. 
If 
\begin{equation*}
\lambda\geq 3\left\|\sumi \phi_{\tau}(\xi_i) X_i\right\|,    
\end{equation*}
 then the following bound holds with probability at least $1-\frac{1}{d}$, 
 \begin{equation*}
   \frac{\V\dif\V_F^2}{m_1m_2}\lesssim\mu^2rm_1m_2(\lambda^2+a^2  (\Eb[\|\Rc\|])^2  )+\frac{\mu^2 a^2\log  d }{n}. 
 \end{equation*}
Here $\Rc$ is defined as
  $ \Rc=\sumi \epsilon_i X_i \ind_{\chi_i}, $ 
 where $\chi_i=\{|\xi_i|\leq \tau/2 \}$ and $\epsilon_i$ are symmetric Bernoulli random variables independent of $\{(X_i,Y_i)\}_{i=1}^n$.
\end{lemma}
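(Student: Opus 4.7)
The plan is to follow the standard penalized M-estimation playbook, but specialized to the Huber loss and adapted so that the weighted Frobenius norm $\|\cdot\|_{w(F)}$ is the geometry in which restricted strong convexity (RSC) holds. First, I would write down the basic inequality from the optimality of $\Ahh$: since $\Ahh$ minimizes $L(A)+\lambda\|A\|_*$ over $\{\|A\|_\infty\le a\}$ and $\|A_0\|_\infty\le a$,
\begin{equation*}
 L(\Ahh)-L(A_0) \le \lambda\bigl(\|A_0\|_*-\|\Ahh\|_*\bigr).
\end{equation*}
By convexity of $L$, the left side dominates $\langle \grd,\Ahh-A_0\rangle$, and the trace duality $|\langle G,\Delta\rangle|\le \|G\|\cdot\|\Delta\|_*$ combined with the hypothesis $\lambda\ge 3\|\grd\|$ (which equals $3\|\spx\|$) controls the linear term by $(\lambda/3)\|\dif\|_*$.

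Next, I would run the usual nuclear-norm decomposition. Writing $\Delta=\dif$ and using the SVD of $A_0$ together with $\|A_0+\Delta\|_*\ge \|A_0\|_*+\langle UV^\top,P_{A_0}^{\perp}(\Delta)\rangle-\|P_{A_0}(\Delta)\|_*$, the previous display implies a cone-type condition: up to an additional nonnegative RSC term on the left, $\|P_{A_0}^{\perp}(\Delta)\|_*$ is controlled by a constant multiple of $\|P_{A_0}(\Delta)\|_*$, hence $\|\Delta\|_*\lesssim \sqrt{r}\,\|\Delta\|_F$. This is the standard low-rank cone bound and will let me replace bounds phrased in the nuclear norm by bounds in $\|\Delta\|_{w(F)}$ after plugging into RSC.

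The heart of the argument is the RSC lower bound for the Huber loss. Define the excess loss $D(\Delta)=L(A_0+\Delta)-L(A_0)-\langle\grd,\Delta\rangle$. On the event $\chi_i=\{|\xi_i|\le \tau/2\}$, since $|\langle X_i,\Delta\rangle|\le 2a$ and $\tau\asymp \max(\sigma,a)\sqrt{n/(m\log^4 d)}\ge 4a$ under $n\ge 160m\log^4 d$, the Huber loss is quadratic on the relevant interval, so $D(\Delta)\ge \tfrac{1}{2n}\sum_i \langle X_i,\Delta\rangle^2\indc$. The task is then to show that this random quadratic form dominates $c\|\Delta\|_{w(F)}^2$ (minus lower-order terms) uniformly over the restricted set $\{\Delta:\|\Delta\|_\infty\le 2a,\ \|\Delta\|_*\le C\sqrt{r}\|\Delta\|_F\}$. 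I would use symmetrization to introduce the Rademacher copies $\epsilon_i$, so the empirical fluctuation becomes a linear functional of $\Rc=\sumi \epsilon_i X_i\indc$ (via contraction on $x\mapsto x^2$ truncated by $4a^2$), yielding a term $\lesssim a^2 \Eb\|\Rc\|\cdot\|\Delta\|_*$ plus a Talagrand/bounded-differences concentration correction of order $\sqrt{\log d/n}$. Combining these, $D(\Delta)\gtrsim \|\Delta\|_{w(F)}^2-a^2\Eb\|\Rc\|\sqrt{r}\|\Delta\|_F-\sqrt{\log d/n}\cdot\|\Delta\|_\infty^2$.

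Finally, I would chain everything: the basic inequality gives $D(\Delta)\le (\lambda/3)\|\Delta\|_*+\lambda(\|P_{A_0}(\Delta)\|_*-\|P_{A_0}^\perp(\Delta)\|_*)\lesssim \lambda\sqrt{r}\|\Delta\|_F$, while Assumption \ref{assumption:X_i}(3) turns the lower bound into $\|\Delta\|_F^2/(\mu m_1m_2)$. Solving the resulting quadratic inequality for $\|\Delta\|_F^2$ and dividing by $m_1m_2$ produces the three advertised terms: $\mu^2 r m_1m_2\lambda^2$, $\mu^2 r m_1m_2 a^2\Eb^2\|\Rc\|$, and the concentration residue $\mu^2\log d/n$. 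The main obstacle I anticipate is making the peeling argument for RSC sharp enough to avoid the usual $\log d$ loss: I would adopt the infinity-norm$+$nuclear-norm peeling scheme of \cite{wainwright2019high} alluded to in the introduction, so that the empirical process contribution appears only through $\Eb\|\Rc\|$, which Lemma \ref{lemma: sharp spectral} will later control dimension-freely.
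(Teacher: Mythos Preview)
Your plan matches the paper's architecture: basic inequality from optimality of $\Ahh$, the cone relation $\|P_{A_0}^{\perp}(\Delta)\|_*\lesssim\|P_{A_0}(\Delta)\|_*$ (hence $\|\Delta\|_*\lesssim\sqrt{r}\|\Delta\|_F$), an RSC lower bound for the Huber excess via symmetrization/contraction and an $(\|\cdot\|_\infty,\|\cdot\|_*)$--peeling, then solve the resulting inequality using $\|\Delta\|_{w(F)}^2\ge \|\Delta\|_F^2/(\mu m_1m_2)$.

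One difference is worth recording. Your observation that $n\ge 160m\log^4 d$ forces $\tau\ge \sqrt{160}\,a>4a$, so $|\langle X_i,\Delta\rangle|\le 2a\le \tau/2$ always and $D(\Delta)\ge \frac{1}{2n}\sum_i\langle X_i,\Delta\rangle^2\indc$ directly, is a genuine shortcut. The paper does \emph{not} exploit this: it introduces a smooth truncation $G_R$, normalizes to $\|A\|_{w(F)}=1$, and must then carry an a priori bound $\|\Delta\|_{w(F)}\le s$; this produces an extra $64a^2s^2/\tau^2$ in the RSC inequality, which is removed only at the end by a localization/rescaling contradiction (take $\tilde A_H=A_0+\eta(\Ahh-A_0)$ on the sphere $\|\cdot\|_{w(F)}=s$). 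Your route bypasses the entire $s$--mechanism.

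One caution. In your RSC sketch you write the concentration residue as $\sqrt{\log d/n}\cdot\|\Delta\|_\infty^2$; taken literally this is precisely the $a^2\sqrt{\log d/n}$ nuisance term the lemma is designed to \emph{avoid}. The paper uses Talagrand's inequality (not bounded differences): after normalizing to $\|A\|_{w(F)}=1$ and peeling over $\alpha=\|A\|_\infty$, $\rho=\|A\|_*$, one gets
\[
Z(\alpha,\rho)\;\lesssim\; \alpha\rho\,\Eb\|\Rc\| \;+\; \alpha\sqrt{\mu\log d/n}\;+\;\alpha^2\,\mu\log d/n,
\]
and after un-normalizing the middle term $\|\Delta\|_\infty\|\Delta\|_{w(F)}\sqrt{\mu\log d/n}$ must be absorbed via $ab\le \epsilon a^2+b^2/(4\epsilon)$ into $\frac{1}{10}\|\Delta\|_{w(F)}^2 + C\mu\|\Delta\|_\infty^2\log d/n$. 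That is what yields the $\mu a^2\log d/n$ residue you correctly list in your final paragraph; make sure your write-up does this step explicitly rather than invoking bounded differences.
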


\begin{proof}[\textbf{Proof of Theorem \ref{theorem: new heavy tail}}]

\textbf{Part I ($\xi_i|X_i$ are symmetric)}:
Recall that  $ \tau=\frac{\max(\sigma, a)}{\log^2d}\sqrt{\frac{n}{m}} $. 
By (\ref{inequality:nasym-spectral-bounded}) in Lemma \ref{lemma: sharp spectral},
the following holds   with probability at least $1-\frac{1}{d}$, 
\begin{equation*}
\left\|\sumi \phi_\tau(\xi_i) X_i\right\|\leq C_1\max(\sigma,a)\sqrt{\dfrac{1}{nm}},
\end{equation*}
where $C_1$ depends on $L_2$ and $L_3$.
With this specific $C_1$, we set
\begin{equation*}
    \lambda=3C_1\max(\sigma,a)\sqrt{\frac{1}{nm}},
\end{equation*}
to ensure $\lambda\geq 3\left\|\sumi \phi_{\tau}(\xi_i) X_i\right\|$.
By (\ref{inequality: E-spectral heavy}) in Lemma \ref{lemma: sharp spectral}, we have
\begin{align}\notag\label{f2}
\Eb\left[\left\Vert \dfrac{1}{n}\sumn \epsilon_iX_i\ind_{\chi_{i}}\right\Vert\right]\leq C_2\sqrt{\frac{1}{nm}},
\end{align}
where $C_2$ depends on $L_2$ and $L_3$.
Substituting this bound and the above choice of $\lambda$
into the inequality of Lemma \ref{lemma: general heavy tail}, we complete the proof of  Part I in Theorem \ref{theorem: new heavy tail}.

\textbf{Part II (general distributional case)}: 

Now we  consider the case that $\xi_i$ only has finite second moments but its distribution is not necessarily symmetric. 
Without  the condition that $\xi_i$ are symmetric,  $\nabla L(A_0)$ may no longer be mean-zero.
It becomes necessary to control the spectral norm of  $\Eb[\nabla L(A_0)]$.
We  write:
\begin{equation*}
    \spx=\sumi (\phi_{\tau}(\xi_i)-\Eb[\phi_{\tau}(\xi_i)|X_i])X_i+\sumi \Eb[\phi_{\tau}(\xi_i)|X_i]X_i.
\end{equation*}
Denote $\Eb[\phi_{\tau}(X_i)|X_i]$ by $u_i$ (we omit the dependence on $\tau$ to ease the notation).
 (\ref{inequality:nasym-spectral-bounded}) in Lemma \ref{lemma: sharp spectral} still applies to $\sumi (\phi_{\tau}(\xi_i)-u_i)X_i$ given $\tau=\max(\sigma,a)\sqrt{\frac{n}{m\log^4 d}}$.
  Then, we have
 \begin{equation*}
    \left\| \sumi (\phi_{\tau}(\xi_i)-\Eb[\phi_{\tau}(\xi_i)|X_i])X_i\right\|\leq C_1\max(\sigma, a)\sqrt{\frac{1}{nm}}.
 \end{equation*}
 Next, we need to bound
\begin{align*}
\left\| \frac{1}{n}\sum_{i=1}^nu_iX_i   \right\|&=\left\| \dfrac{1}{n}\sum_{i=1}^n (u_iX_i-\Eb[u_iX_i])+\sumi \Eb[u_iX_i]\right\|\\
&\leq \left\| \dfrac{1}{n}\sum_{i=1}^n (u_iX_i-\Eb[u_iX_i])\right\|+\lnorm \sumi\Eb[u_iX_i]\rnorm.
\end{align*}

Consider  a random variable $Y$ has 0 mean  and finite moment of order $2+\kappa$ with $(\Eb[|Y|^{2+\kappa}])^{\frac{1}{2+\kappa}}\leq \sigma$ where $\kappa\geq 0$. 
By  H\"older's inequality,  for  $\tau>0$, we have
\begin{align*}
    |\Eb \phi_{\tau}(Y)|&=|\Eb [Y-\phi_{\tau}(Y)]|\\
    &=|\Eb[(Y-\phi_{\tau}(Y)) \ind(|Y|>\tau)]\\
    &\leq (\Eb[|Y|^p])^{1/p}\Pb^{1/q}(|Y|>\tau)
\end{align*}
 where $1/p+1/q=1$ and $p,q\geq 1$.
By Markov's inequality, we have $\Pb(Y>\tau)\leq \Eb[|Y|^{2+\kappa}]/\tau^{2+\kappa}$. 
Take $1/q=1-1/(2+\kappa)$.
Then, we obtain  
\begin{align}
\notag  |\Eb[(Y-\phi_{\tau}(Y))\ind(|Y|>\tau)]|&\leq (\Eb[|Y|^{2+\kappa}])^{\frac{1}{2+\kappa}}\left((\Eb[|Y|^{2+\kappa}])^{\frac{1+\kappa}{2+\kappa}}\tau^{-(1+\kappa)}\right)\\
\label{v12-4}  &\leq \sigma\left(\frac{\sigma}{\tau}\right)^{1+\kappa}.
\end{align}

Assume that $(\Eb[|\xi_i|^{2+\kappa}])^{\frac{1}{2+\kappa}} \leq  \sigma$ 
uniformly  given $X_i$.
Applying  (\ref{v12-4})  with $\tau=\max(\sigma,a) \sqrt{\frac{n}{m\log^4 d}} $ to $u_i$, we obtain
\begin{equation}\label{v12-5}
  |u_i|\leq \sigma \left(\frac{m\log^4 d}{n}\right)^{(1+\kappa)/2}. \end{equation}

We next apply  Theorem 6.1 in \cite{tropp2012user} to $\frac{1}{n}\sum_{i=1}^n (u_iX_i-\Eb[u_iX_i]) $.
For readers' convenience, we reproduce it below.
\begin{theorem}[Theorem 6.1 in \cite{tropp2012user}: Matrix Bernstein - bounded case]\label{theorem:trad}
Consider a finite sequence $\{Q_i\}_{i=1}^n$ of independent, random matrices with dimension $m_1\times m_2$. Assume that
\begin{equation*}
    \Eb [Q_k] = 0 \quad \text{and} \quad \|Q_k\| \le R \quad \text{almost surely}.
\end{equation*}
Define 
\begin{equation*}
    \gamma^2(Q) =\max (\|\Eb[QQ^T]\|, \|\Eb[Q^TQ]\|).
\end{equation*}
Then the following chain of inequalities holds for all $t \ge 0$:
\begin{equation*}
  \mathbb{P} \left( \|Q\| \ge t \right)
\le d \cdot \exp\left( \frac{-t^{2}/2}{\sigma^{2} + R t/3} \right).  
\end{equation*}   
\end{theorem}
By Theorem \ref{theorem:trad},  there exist $C_1$ and $C_2$ such that 
\begin{equation}\label{inequality: trad}
\Pb(\|Q\|\geq C_1R(t+\log d)+C_2\sigma\sqrt{t+\log d})\leq \exp(-t).  
\end{equation}
Consider $Q_i=\frac{1}{n}(u_iX_i-\Eb[u_iX_i])$.

By (\ref{v12-5}), we have
\begin{equation*}
 \frac{1}{n}\|u_iX_i-\Eb[u_iX_i]\|\leq   \frac{2\sigma}{n} \left(\frac{m\log^4 d}{n}\right)^{(1+\kappa)/2}. 
\end{equation*}
And we also have
\begin{equation*}
 \|\Eb(u_iX_i-\Eb[u_iX_i])(u_iX_i-\Eb[u_iX_i])^T\|\lesssim \frac{L_2\sigma^2}{m}\left(\frac{m\log^4 d}{n}\right)^{(1+\kappa)}   
\end{equation*}
and 
\begin{equation*}
\|\Eb(u_iX_i-\Eb[u_iX_i])^T(u_iX_i-\Eb[u_iX_i])\|\lesssim \frac{L_2\sigma^2}{m}\left(\frac{m\log^4 d}{n}\right)^{(1+\kappa)}.    
\end{equation*}
Thus,  by (\ref{inequality: trad}), 
the following holds  with probability at least $1-\frac{1}{d}$,  
\begin{align*}
 \left\|\frac{1}{n}\sum_{i=1}^n (u_iX_i-\Eb[u_iX_i])\right\|&\lesssim \sigma \sqrt{\frac{\log d}{nm}}\left(\frac{m\log^4 d}{n}\right)^{(1+\kappa)/2}\\
 &+ \frac{\sigma\log d}{n}\left(\frac{m\log^4 d}{n}\right)^{(1+\kappa)/2}.   
\end{align*}
Given $n\geq m\log^{4+\frac{1}{\kappa+1}} d$, we have, with probability at least $1-\frac{1}{d}$,
\begin{equation*}
\left\| \dfrac{1}{n}\sum_{i=1}^n (u_iX_i-\Eb[u_iX_i])\right\|\lesssim  \sigma\sqrt{\frac{1}{nm}}.   
\end{equation*}
Denote $u_i(j,k)=\Eb[\phi_\tau(\xi_i)|X_i=e_je^T_k]$.
Note
\begin{align*}
\|\Eb[u_iX_i]\|&=\sup_{\|z\|_2=1,\|y\|_2=1}z^T\Eb[u_iX_i]y\\
&\leq \left( \sum_{j=1}^{m_1}\sum_{k=1}^{m_2}P^2_{jk}\right)^{1/2}\left(\sum_{j=1}^{m_1}\sum_{k=1}^{m_2} z_j^2y_k^2u^2_i(j,k)\right)^{1/2}\\
&\leq \sigma\sqrt\frac{1}{m_1m_2}  \left(\frac{m\log^4 d}{n}\right)^{(1+\kappa)/2}.
\end{align*}

Given $Mn^{\kappa}\geq m^{1+\kappa}\log^{4(1+\kappa)} d$,  
we have
\begin{equation*}
   \left\|\sumi\Eb[u_iX_i]\right\|\leq  \sigma\sqrt{\frac{1}{nm}}.
\end{equation*}
Combining the pieces, we have the following results:
(i) when $\kappa=0$, given $M\geq  m \log^4 d$ and $n\geq 160 m\log^5 d$, the following holds with probability at least $1-\frac{2}{d}$,
\begin{equation*}
    \left\|\sumi \phi_\tau(\xi_i) X_i\right\|\lesssim\max(\sigma,a)\sqrt{\frac{1}{nm}}.
\end{equation*}
(ii) when $k>0$, given  $n\geq m \log^{4+\frac{4}{\kappa}} d$ and $n\geq 160 m\log^{4+\frac{1}{1+\kappa}} d$, the following holds with probability at least $1-\frac{2}{d}$,
\begin{equation*}
 \left\|\sumi \phi_\tau(\xi_i) X_i\right\|\lesssim\max(\sigma,a)\sqrt{\frac{1}{nm}}.   
\end{equation*}
\end{proof}

  Before presenting the proof of Lemma \ref{lemma: general heavy tail}, we state a technical lemma. This lemma shows that with a fine-tuned $\lambda$ in (\ref{estimator: heavy tail}), $\Ahh-A_0$ lies in a nearly low-rank region.

\begin{lemma}\label{lemma: special cone}
For a matrix $A$, denote its singular value decomposition as $A=U\Sigma V^T$. We define the projection operators
\begin{equation*}
 P_{A}(B)=UU^T B V V^T   
\end{equation*}
and
\begin{equation*}
 P_{A}^{\perp}(B)=B-P_{A}(B).   
\end{equation*}
With $\lambda$ in (\ref{estimator: heavy tail}) satisfying $\lambda \geq 3 \|\nabla L(A_0) \|$, we  obtain
\begin{equation*}
   \V P_{A_0}^{\perp}(\Ahh-A_0)\|_*
   \leq 2\V P_{A_0}(\Ahh-A_0)\|_*. 
\end{equation*}
\end{lemma} 

\begin{proof}
As the solution to a convex program, by Proposition 1.3 in \cite{bubeck2014theory}, $ \Ahh $ satisfies
\begin{align*}
\la \nabla L(\Ahh)+\lambda Z, \Ahh-A_0\ra\leq 0,
\end{align*}
where $ Z\in \partial \Vert \Ahh\Vert_* $.
Since $ \Vert \cdot\Vert_* $ is a convex function, we have
\begin{equation*}
\la Z, A_0-\Ahh \ra \leq \Vert A_0\Vert_*-\Vert\Ahh\Vert_*. 
\end{equation*}
By the convexity of $ L(\cdot) $,
\begin{equation*}
\la \nabla L(\Ahh)-\nabla L(A_0), \Ahh-A_0\ra\geq 0.
\end{equation*}
Combining the results above, we obtain
\begin{align}
\notag0&\leq \la \nabla L(\Ahh)-\nabla L(A_0), \Ahh-A_0\ra\\
&\leq \lambda (\Vert A_0\Vert_*-\Vert\Ahh\Vert_*)
+\Vert \nabla L(A_0)\Vert \Vert\Ahh-A_0\Vert_*.\label{inequality: lemma nl 1}
\end{align}
Observe that
\begin{align*}
\V \Ahh\|_*&=\V P_{A_0}(\Ahh-A_0)+P_{A_0}^{\perp}(\Ahh-A_0)+P_{A_0}(A_0)\|_*\\
&\geq \V P_{A_0}^{\perp}(\Ahh-A_0)\|_*+\V P_{A_0}(A_0)\|_*-\V P_{A_0}(\Ahh-A_0)\|_*.
\end{align*}
Then, it follows that
\begin{align*}
&0\leq \frac{4}{3}\V P_{A_0}(\Ahh-A_0)\|_*-\frac{2}{3}\V P_{A_0}^{\perp}(\Ahh-A_0)\|_*,
\end{align*} 
which leads to the statement of Lemma \ref{lemma: special cone}.
\end{proof}

\begin{proof}[\textbf{Proof of Lemma \ref{lemma: general heavy tail}}]

By Lemma \ref{lemma: special cone}, $\Ahh-A_0$ satisfies 
\begin{equation*}
    \|\Ahh-A_0\|_*
\leq\\ 3\|P_{A_0}(\Ahh-A_0)\|_*.
\end{equation*}
From \cite{wainwright2019high} Chapter 10, we know, for a matrix $B$,
\begin{equation*}
   \| P_{A_0}(B)\|_*\leq\sqrt{2\text{rank}(A_0)}\|B\|_F.
\end{equation*}
Then we have
\begin{equation*}
    \|\hat{A}_H-A_0\|_*\leq 3\sqrt{2r}\|\hat{A}_H-A_0\|_F.
\end{equation*}
Define the weighted Frobenius norm as
\begin{equation*}
    \|A\|_{w(F)}=\sqrt{\sum_{i=1}^{m_1}\sum_{j=1}^{m_2}P_{ij}a^2_{ij}}.
\end{equation*} 

As a solution of the  convex program (\ref{estimator: heavy tail}), by (\ref{inequality: lemma nl 1}),  $\Ah_H $ satisfies:
\begin{align}
\notag\la \nabla L(\Ah_H)-L(A_0), \Ah_H-A_0\ra &\leq \frac{4}{3}\lambda\|\Ah_H-A_0\|_*\\
& \leq 4\sqrt{2r}\lambda\|\Ahh-A_0\|_{F},\label{AHeq1}
\end{align}
given $\lambda\geq 3\left\|\sumi \phi_{\tau}(\xi_i) X_i\right\|$.

Now we state a technical lemma.
\begin{lemma}\label{lemma: RSC Huber}
 With probability at least $ 1-\frac{1}{d} $, the following holds uniformly for matrices with $\|A\|_{w(F)}\leq s$
\begin{align}
\notag  \la\nabla L(A_0+A)-\nabla L(A_0), A\ra
&\geq
\V A\Vf^2\left(\frac{9}{10}
-\dfrac{4\sigma^2}{\tau^2}\right)-\dfrac{16\|A\|^2_\infty s^2}{\tau^2}\\
\label{3.21} &-c_3\Bigg(\|A\|_\infty \V A\|_*\Eb[\|\Rc\|]+\frac{\mu \|A\|_\infty^2\log d}{n}\Bigg),
\end{align}
 where $ c_3 $ is a universal constant. 
\end{lemma}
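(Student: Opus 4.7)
The plan is to reduce the bilinear form $\la\nabla L(\Ahh)-\nabla L(A_0),\dif\ra$ to a quadratic form in $u_i:=\la X_i,\dif\ra$ by exploiting monotonicity of the Huber derivative $\phi_\tau$, and then to control the expectation of this quadratic form (which yields the principal $\V\dif\Vf^2$ term) and its empirical fluctuation (which yields the $\Eb\|\Rc\|$-dependent and $\mu\log d/n$ error terms) separately.

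Writing $\nabla L(A)=-\sumi\phi_\tau(Y_i-\la X_i,A\ra)X_i$, one obtains
\begin{equation*}
\la\nabla L(\Ahh)-\nabla L(A_0),\dif\ra=\sumi[\phi_\tau(\xi_i)-\phi_\tau(\xi_i-u_i)]u_i.
\end{equation*}
Since $\phi_\tau$ is nondecreasing, $1$-Lipschitz, and the identity on $[-\tau,\tau]$, a case analysis yields $[\phi_\tau(\xi_i)-\phi_\tau(\xi_i-u_i)]u_i\geq u_i^2\chi_i\ind(|u_i|\leq\tau/2)$. Splitting $\ind(|u_i|\leq\tau/2)=1-\ind(|u_i|>\tau/2)$ and using $|u_i|\leq\|\dif\|_\infty$ (because $X_i=e_je_k^T$) to bound $u_i^2\ind(|u_i|>\tau/2)\leq 4u_i^4/\tau^2\leq 4\|\dif\|_\infty^2 u_i^2/\tau^2$, the right-hand side is at least $\sumi u_i^2\chi_i-(4\|\dif\|_\infty^2/\tau^2)\sumi u_i^2$.

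Next, decompose $\sumi u_i^2\chi_i=\Eb[u_i^2\chi_i]+\Psi(\dif)$ for the centered process $\Psi(\dif)$. Conditioning on $X_i$ and applying Chebyshev give $\Eb[u_i^2\chi_i\mid X_i]\geq u_i^2(1-4\sigma^2/\tau^2)$, contributing the leading term $\V\dif\Vf^2(1-4\sigma^2/\tau^2)$; a Bernstein-type concentration bound on $\sumi u_i^2$ around its mean $\V\dif\Vf^2\leq s^2$ absorbs the $(4\|\dif\|_\infty^2/\tau^2)\sumi u_i^2$ correction into the $16\|\dif\|_\infty^2 s^2/\tau^2$ remainder, while an analogous deviation shaves the leading coefficient from $1$ to $9/10$. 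To bound $\Psi(\dif)$ I would (i) symmetrize with Rademachers $\epsilon_i$; (ii) apply the Talagrand--Ledoux contraction principle to the $2\|\dif\|_\infty$-Lipschitz function $t\mapsto t^2\chi_i$ (vanishing at $0$), reducing the expected supremum to a multiple of $\|\dif\|_\infty\cdot\Eb|\la\Rc,\dif\ra|$; (iii) invoke trace duality $|\la\Rc,\dif\ra|\leq\|\Rc\|\V\dif\|_*$ to produce the term $\|\dif\|_\infty\V\dif\|_*\Eb\|\Rc\|$; and (iv) apply Talagrand's concentration inequality for bounded empirical processes---with envelope of order $\|\dif\|_\infty^2/n$ and variance of order $\mu\|\dif\|_\infty^2 s^2/n$ via Assumption~\ref{assumption:X_i}(3)---to obtain the residual $\mu\|\dif\|_\infty^2\log d/n$, all on an event of probability at least $1-1/d$.

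The main obstacle is making the empirical-process bound on $\Psi(\dif)$ simultaneously \emph{uniform} over $\dif$ and \emph{homogeneous} in both $\|\dif\|_\infty$ and $\V\dif\|_*$, so that the right-hand side retains the product form $\|\dif\|_\infty\V\dif\|_*\Eb\|\Rc\|$ rather than a supremum over a fixed ball. A peeling argument in the style of \cite{wainwright2019high}, stratifying over dyadic levels of $\V\dif\|_*$ and union-bounding over the resulting $O(\log d)$ scales, is what ultimately produces the $\log d$ factor in the residual and dictates the $1/d$ failure probability.
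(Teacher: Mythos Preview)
Your strategy---monotonicity of $\phi_\tau$ to reduce to a truncated quadratic form, then expectation plus symmetrization/contraction/Talagrand for the fluctuation, then peeling---is exactly the paper's. Two points of execution differ and are worth noting.

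First, instead of splitting $\ind(|u_i|\leq\tau/2)=1-\ind(|u_i|>\tau/2)$ and then invoking a \emph{separate} Bernstein bound on $\sumi u_i^2$, the paper introduces the Lipschitz truncation $G_R(x)$ (with $R=\tau/(4s)$), normalizes to $\|A\|_{w(F)}=1$, and works with the single empirical process $\sumi G_R(\la X_i,A\ra)\ind_{\chi_i}$. This is cleaner: the $s$--dependence is baked into $G_R$ so the term $16\|\dif\|_\infty^2 s^2/\tau^2$ drops out deterministically from the expectation lower bound, and no auxiliary uniform concentration for $\sumi u_i^2$ is needed. Your route can be made to work, but the ``Bernstein-type bound on $\sumi u_i^2$'' must itself be made uniform in $\dif$, which essentially duplicates the $\Psi(\dif)$ analysis.

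Second---and this is the real gap---peeling only over dyadic levels of $\|\dif\|_*$ is not enough. To apply Talagrand's inequality you must fix an envelope and a variance proxy for the class, and both scale with $\|\dif\|_\infty$ (envelope $\sim\|\dif\|_\infty^2$, variance $\sim\|\dif\|_\infty^2\|\dif\|_{w(F)}^2$); likewise the contraction step needs a class--wide Lipschitz constant $2\alpha$ with $\|\dif\|_\infty\leq\alpha$. If $\|\dif\|_\infty$ is unconstrained within a $\|\dif\|_*$--shell, these quantities are not controlled and the inequality does not yield factors homogeneous in $\|\dif\|_\infty$. The paper therefore peels \emph{jointly} over dyadic shells in $(\|A\|_\infty,\|A\|_*)$ (after normalizing $\|A\|_{w(F)}=1$), giving $O(\log^2(\mu d))$ cells; the choice $t\propto\mu\log d/n$ then survives the union bound and produces the $\mu\|\dif\|_\infty^2\log d/n$ residual. (So $\mu$ enters through this union bound over $\log^2(\mu d)$ cells, not through the variance via Assumption~\ref{assumption:X_i}(3) as you wrote.) Without the second peeling dimension you would at best obtain the bound with $\|\dif\|_\infty$ replaced by the crude constraint $2a$, which is weaker than the stated lemma.
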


 We assume $\|\Ahh-A_0\|_{w(F)}\leq s$ where s will be determined latter. 
By Lemma \ref{lemma: RSC Huber},  the following holds with probability at least $1-\frac{1}{d}$,
\begin{align}
\notag  &\la\nabla L(\Ahh )-\nabla L(A_0), \Ahh-A_0\ra\\
\geq
\notag&\V \Ahh-A_0\Vf^2\left(\frac{9}{10}
-\dfrac{4\sigma^2}{\tau^2}\right)-\dfrac{16\|\Ahh-A_0\|^2_\infty s^2}{\tau^2}\\
\label{v12-3} &-c_3\Bigg(\|\Ahh-A_0\|_\infty \V \Ahh-A_0\|_*\Eb[\|\Rc\|]+\frac{\mu \|\Ahh-A_0\|_\infty^2\log d}{n}\Bigg),
\end{align}
where $ c_3 $ is a universal constant. 
By (\ref{AHeq1}) and (\ref{v12-3}), we have
\begin{align}
\notag  \V \Ahh-A_0\Vf^2\left(\frac{9}{10}
-\dfrac{4\sigma^2}{\tau^2}\right)\leq  4\sqrt{2}\lambda\|\Ahh-A_0\|_{F}+ \dfrac{16\|\Ahh-A_0\|^2_\infty s^2}{\tau^2}\\
+c_3\Bigg(\|\Ahh-A_0\|_\infty \V \Ahh-A_0\|_*\Eb[\|\Rc\|]+\frac{\mu \|\Ahh-A_0\|_\infty^2\log d}{n}\Bigg). \label{Aheq3}
\end{align}
Given $n\geq 160m\log ^4 d$, we have
\begin{align*}
    \frac{4\sigma^2}{\tau^2}&\leq \frac{1}{10},\\
    \frac{64a^2}{\tau^2}&\leq \frac{2}{5}.
\end{align*}
Since $\|\Ahh-A_0\|_\infty\leq 2a$ and $\|\hat{A}_H-A_0\|_*\leq 3\sqrt{2r}\|\hat{A}_H-A_0\|_F$, we have
\begin{align*}
    \frac{\mu \|\Ahh-A_0\|_\infty^2\log d}{n}&\leq \frac{4\mu a^2\log d }{n},\\
    \|\Ahh-A_0\|_\infty \V \Ahh-A_0\|_*\Eb[\|\Rc\|]&\leq 180c_3 \mu a^2 rm_1m_2(\Eb[\|\Rc\|])^2 +\frac{\|\Ahh-A_0\|_F^2}{10c_3\mu m_1m_2},\\
    \dfrac{16\|\Ahh-A_0\|^2_\infty s^2}{\tau^2}&\leq\frac{64a^2s^2}{\tau^2}.
\end{align*}
Moreover, it holds
\begin{align*}
 4\sqrt{2r}\lambda\|\Ahh-A_0\|_F
 \leq 160 \mu rm_1m_2\lambda^2 +\frac{1}{5}\|\Ahh-A_0\|^2_{w(F)}.
\end{align*}
Plug in the above results in (\ref{Aheq3}), then we obtain that there exists a constant $c_4$ such that the following holds
\begin{align}\label{AHeq2}
\|\dif\|^2_{w(F)} 
\leq \frac{2}{3} s^2+c_4(\mu rm_1m_2(\lambda^2+a^2(\Eb[\|\Rc\|])^2 )+\frac{\mu a^2\log  d }{n}).
\end{align}

Let $\delta=c_4\left(\mu rm_1m_2(\lambda^2+a^2 (\Eb[\|\Rc\|])^2 )+\frac{\mu a^2\log  d }{n}\right)$.
We take $ s^2= 10\delta $.
If  $\|\hat A_H - A_0\|_{w(F)} > s$, we define $\tilde A_H$ by
\[
\tilde A_H - A_0 = \eta(\hat A_H - A_0),
\]
where $\eta > 0$ is chosen as $s/\|\Ahh-A_0\|_{w(F)}$  so that
\[
\|\tilde A_H - A_0\|_{w(F)} = s.
\]

To obtain (\ref{AHeq2}), we start from (\ref{AHeq1}) and (\ref{v12-3}) and use the following properties of $\Ahh-A_0$
\begin{itemize}
    \item[(a1)] $\|\Ahh-A_0\|_\infty\leq 2a$; 
    \item[(a2)] $\|\Ahh-A_0\|_*\leq 3\sqrt{2r}\|\Ahh-A_0\|_F $ ;
    \item[(a3)] $\|\Ahh-A_0\|_{w(F)}\leq s$.
\end{itemize}
By  Lemma F.2 in  \cite{fan2018lamm}, we have
\begin{align*}
	\la \nabla L(\At_H)-\nabla L(A_0),\At_H-A_0\ra &\leq \eta\la \nabla L(\Ahh)-\nabla L(A_0),\dif\ra \\
&\leq\frac{4}{3}\eta\lambda\V ( \Ahh -A_0)\|_*\\
&=\frac{4}{3}\lambda\|\tilde A_H-A_0\|_*.
\end{align*}
Thus, (\ref{AHeq1}) holds for $\tilde{A}_H$. 
By Lemma \ref{lemma: RSC Huber}, (\ref{v12-3}) holds for $\tilde{A}_H$, too.
Note that $ \At_H $  satisfies 
\begin{equation*}
\|\tilde{A}_H-A_0\|_*\leq 3\sqrt{2r}\|\At_H-A_0\|_F.  
\end{equation*}
Hence $\tilde{A}_H-A_0$ also  satisfies (a1)-(a3).
Now we can repeat the argument for $\tilde{A}-A_0$ and obtain 
\begin{equation*}
\V \At_H-A_0\Vf^2\leq\delta+\frac{2}{3} s^2\leq 0.77s^2,
\end{equation*} 
which contradicts with the fact that $ \V\At_H-A_0\Vf^2=s^2 $.

Therefore,  we obtain that the following holds with probability at least $1-\frac{1}{d}$,
\begin{equation*}
\V\dif\Vf^2\leq 10c_4\left(\mu rm_1m_2(\lambda^2+a^2(\Eb[\|\Rc\|])^2)+\frac{\mu a^2\log  d }{n}\right)
\end{equation*}
and 
\begin{equation*}
\frac{\V\dif\V_F^2}{m_1m_2}\lesssim\mu^2 rm_1m_2(\lambda^2+a^2(\Eb[\|\Rc\|])^2)+\frac{\mu^2 a^2\log  d }{n}.
\end{equation*}
\end{proof}

\subsection{Proof of restricted strong convexity for Huber loss function} \label{section:rsc huber}
\begin{proof}[\textbf{Proof of Lemma \ref{lemma: RSC Huber}}]
Recall $L(A)=\sumi l_\tau(Y_i-\la X_i,A\ra).$
Consider events $ H_i=\{ |\la X_i, A \ra |\leq \tau/2\}\cap\{ |\xi_i|\leq \tau/2\} $. 
We have the following result
\begin{align*}
&\quad\la \nabla L(A_0+A)-\nabla L(A_0), A \ra\\
&=\sumi (\phi_\tau(Y_i-\la X_i,A_0\ra)-\phi_\tau(Y_i-\la X_i, A_0+A\ra) ) \la X_i, A\ra \\
&\geq \sumi (\phi_\tau(Y_i-\la X_i,A_0\ra)-\phi_\tau(Y_i-\la X_i,A_0+A\ra) ) \la X_i, A\ra\ind_{H_i}\\
&= \dfrac{1}{n}\sumn \la X_i, A\ra^2\ind_{H_i}.
\end{align*}
Define  $ G_{R}(x):\Rb\rightarrow\Rb$ by
\begin{align*}
G_R(x)=\begin{cases}
x^2, |x|\leq R,\\
(x-2R\cdot \sign(x))^2, R\leq |x|\leq 2R,\\
0, \text{otherwise}.
\end{cases}
\end{align*}
$ G_R(x) $ has the following properties:
\begin{itemize}
\item[1.] For $x\in \Rb$, 
\begin{equation}\label{Prop1}
  x^2\ind_{|x|\leq R}\leq  G_R(x)  \leq  x^2\ind_{|x|\leq 2R}.
\end{equation}
\item[2.] $ G_R(x) $ is  Lipschitz continuous  with $ |G_R(x)-G_R(y)|\leq 2R|x-y| $ since $ |G_R'(x)|\leq 2R $. 
Moreover, if for  $ |x|,|y|\leq  a $,
\begin{equation}\label{Prop2}
    |G_R(x)-G_R(y)|\leq  2a|x-y|.
\end{equation}
\end{itemize}
Then by (\ref{Prop1}) and  $ \V A\Vf\leq s $, we have
\begin{align*}
&\quad~ \left\la X_i, \dfrac{A}{\V A\Vf}\right\ra^2\ind_{H_i}\\
&=\left\la X_i, \dfrac{A}{\V A\Vf}\right\ra^2
\ind\left(\dfrac{\la X_i,A\ra}{\V A\Vf}\leq \dfrac{\tau}{2\V A\Vf}\right)
\ind_{\chi_i}\\
&\geq \left\la X_i, \dfrac{A}{\V A\Vf}\right\ra^2 
\ind\left(\dfrac{\la X_i, A\ra}{\V A\Vf}\leq \dfrac{\tau}{2s}\right)
\ind_{\chi_i}\\
& \geq  G_{\frac{\tau}{4s}}\left(\left\langle X_{i},\frac{A}{\|A\|_{w(F)}}\right\rangle\right)\ind_{\chi_{i}},
\end{align*}
where $ \chi_i=\{|\xi_i|\leq \tau/2\} $. Hence, we have
\begin{align*}
\dfrac{1}{n}\sumn \la X_i, A \ra^2\ind_{H_i}
&= \dfrac{\Vert A\Vert_{w(F)}^2}{n}\sumn\left\la X_i, \dfrac{A}{\|A\|_{w(F)}}\right\ra^2\ind_{H_i}\\
&\geq \|A\|^2_{w(F)}\left(\dfrac{1}{n}\sumn G_{\frac{\tau}{4s}}\left(\left\langle X_{i},\frac{A}{\|A\|_{w(F)}}\right\rangle\right)\ind_{\chi_{i}}\right).
\end{align*}
For a matrix $A\in \mathbb{R}^{m_1\times m_2}$, we define
\begin{equation*}
	f(A):=\Eb_{X_i,\xi_i} [G_{\frac{\tau}{4s}} (X_i,A)\ind_{\chi_i}].
\end{equation*}
The following holds for $f(A)$
\begin{align*}
f(A)&\geq \Eb[\la X_i, A\ra^2] \ind\{|\la X_i, A\ra|\leq \tau/4s\}\ind_{\chi_i}\\
&\geq \Eb[\la X_i,A\ra^2]
-\Eb[\la X_i,A\ra^2\ind\{|\la X_i, A\ra|> \tau/4s\}]
-\Eb[\la X_i,A\ra^2 \ind\{|\xi_i|>\tau/2\}]\\
&\geq \V A\Vf^2
-\dfrac{16s^2\Vert A\Vert^2_{\infty}}{\tau^2}\V A\Vf^2
-\dfrac{4\sigma^2}{\tau^2}\V A\Vf^2\\
&=\V A\Vf^2\left(1-\dfrac{16s^2\Vert A\Vert_{\infty}^2}{\tau^2}-\dfrac{4\sigma^2}{\tau^2}\right).
\end{align*}
Thus 
\begin{align}\label{3.22}
&\Vert A\Vf^2f\left(\frac{A}{\Vert A\Vf}\right)\geq \V A\Vf^2(1-\dfrac{4\sigma^2}{\tau^2})-\dfrac{16\|A\|^2_\infty s^2}{\tau^2}.
\end{align}
Now we introduce a useful lemma.
\begin{lemma}\label{lemma: RSC Huber 1}
Assume $ \V A\Vf^2=1 $. Then there are universal constants $ c_0, c_1, c_2 $ such that 
\begin{align}
&\notag \left| \dfrac{1}{n}\sumn G_{\frac{\tau}{4s}}(\la X_i,A\ra )\ind_{\chi_i}-f(A)\right|\\
&\leq 8c_0\|A\|_\infty\|A\|_*\Eb[\|\Rc\|]
+2\sqrt{2}c_1\|A\|_\infty\sqrt{\frac{\mu \log d}{n}}
+8c_2\frac{\mu\V A\V_{\infty}^2\log d}{n} \label{3.23}
\end{align}
for all such $A$, uniformly with probability at least $ 1-\frac{1}{d} $. 
Here  $\Rc=\sumi \epsilon_i X_i \ind_{\chi_i}$
,$\chi_i=\{|\xi_i|\leq \tau/2\}$ and $\{\epsilon_i\}_{i=1}^n$ are independent symmetric Bernoulli random variables independent of $\{(X_i,Y_i)\}_{i=1}^n$.
\end{lemma}
Combining  (\ref{3.22}) and (\ref{3.23}), we get
\begin{align}
\notag &\quad  \la\nabla L(A_0+A)-\nabla L(A_0), A\ra
\\
\notag&\geq \V A\Vf^2(1-\dfrac{4\sigma^2}{\tau^2})-\dfrac{16\|A\|_\infty s^2}{\tau^2}
-\Bigg( 8c_0\|A\|_\infty\|A\|_* \Eb[\|\Rc\|]\\
\notag&+8c_2\frac{\mu\V A\V_{\infty}^2\log d}{n}+2\sqrt{2}c_1\V A\V_{\infty}\V A\Vf\sqrt{\frac{\mu\log d}{n}}\Bigg)\\
\notag&\geq \V A\Vf^2(1-\dfrac{4\sigma^2}{\tau^2})-\dfrac{16\|A\|^2_\infty s^2}{\tau^2}\\
\label{3.24}&-c_3\Bigg(\|A\|_*\|A\|_\infty
+\frac{\mu \|A\|^2_\infty \log d}{n}\Bigg)
-\frac{\V A\Vf^2}{10}
\end{align}
where in the last step, we apply Young's inequality $ 2ab\leq \frac{a^2}{c^2}+c^2b^2 $ for $a,b,c \in \Rb$ and choose a appropriate constant $ c_3 $. 
Then (\ref{3.21}) is concluded  from (\ref{3.24}).
\end{proof}

\begin{proof}[\textbf{Proof of Lemma \ref{lemma: RSC Huber 1}}]
We begin by introducing a classical concentration inequality for empirical processes, which can be found in many textbooks such as \cite{wainwright2019high}.
\begin{lemma}[Theorem 3.27 in \cite{wainwright2019high}]\label{lemma: CI for EPs}
Let $ Z_i, 1\leq i\leq n $, be independent but not necessarily identically distributed random variables taking values in measurable spaces $ \mathcal{Z}, 1\leq i\leq n $. $ \mathscr{F} $ is a function class. Consider the random variable
\begin{equation*}
Z=\sup_{f\in \mathscr{F}}\frac{1}{n}\sum_{i=1}^{n}f(Z_i).
\end{equation*}
If $ \Vert f\Vert_{\infty}\leq b $ for all $ f\in\mathscr{F} $, we have the following inequality, for $ t>0 $,
\begin{equation}\label{inequality:empirical}
\Pb(Z\geq c_0\Eb[Z]+c_1 \tilde\sigma \sqrt{t}+c_2bt)\leq \exp\left(-nt\right)
\end{equation}
where $ \tilde\sigma^2=\sup_{f\in \mathscr{F}} \frac{1}{n}\sum_{i=1}^{n} \Eb[f^2(Z_i)] $ and $c_0, c_1, c_2$ are some universal constants.
\end{lemma}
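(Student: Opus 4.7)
The plan is to upgrade an empirical-process concentration around the mean, proved at fixed scales, to a uniform bound via dyadic peeling in $\|A\|_\infty$ and $\|A\|_*$. The three ingredients are: Talagrand's functional concentration inequality (Lemma \ref{lemma: CI for EPs}) at fixed scales, Giné--Zinn symmetrization plus Ledoux--Talagrand contraction to relate the expected supremum to $\Eb\|\Rc\|$, and a union bound over dyadic scales.

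At fixed parameters $\alpha, \beta > 0$, introduce the sub-class $\mathscr{F}_{\alpha,\beta} = \{A \in \Rm: \|A\|_{w(F)} = 1,\ \|A\|_\infty \leq \alpha,\ \|A\|_* \leq \beta\}$ and the random functions $g_A(X,\xi) := G_{\tau/(4s)}(\langle X, A\rangle)\ind_{|\xi|\leq \tau/2}$. Using $G_R(u) \leq u^2$ together with $|\langle X_i, A\rangle| \leq \|A\|_\infty$ for one-hot $X_i$, one obtains a uniform envelope $\|g_A\|_\infty \leq \alpha^2$ and the variance bound
\[
\Eb g_A^2 \leq \Eb \langle X, A\rangle^4 \leq \|A\|_\infty^2 \|A\|_{w(F)}^2 = \alpha^2.
\]
Applying Lemma \ref{lemma: CI for EPs} with $b = \alpha^2$, $\tilde\sigma^2 = \alpha^2$, and with $t$ appropriately inflated to $O((\log d+\log\mu)/n)$ to accommodate the later union bound, gives with probability at least $1 - 1/d$
\[
Z_{\alpha,\beta} := \sup_{A \in \mathscr{F}_{\alpha,\beta}} \left| \frac{1}{n}\sum_{i=1}^n g_A(X_i,\xi_i) - \Eb g_A \right| \leq c_0\, \Eb Z_{\alpha,\beta} + c_1 \alpha \sqrt{\frac{\mu\log d}{n}} + c_2 \alpha^2 \frac{\mu\log d}{n}.
\]

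To bound $\Eb Z_{\alpha,\beta}$, I would first apply Giné--Zinn symmetrization to replace $g_A - \Eb g_A$ by $\epsilon_i g_A$, then invoke Ledoux--Talagrand contraction. Conditional on $\{\xi_i\}$, the functions $u \mapsto G_{\tau/(4s)}(u)\ind_{\chi_i}$ vanish at zero and, restricted to $[-\alpha,\alpha]$, are Lipschitz with constant at most $2\alpha\,\ind_{\chi_i} \leq 2\alpha$ (since $|G_R'(u)| \leq 2\min(R,|u|)$). Contraction followed by spectral/nuclear duality therefore yields
\[
\Eb Z_{\alpha,\beta} \leq 2\, \Eb \sup_{A \in \mathscr{F}_{\alpha,\beta}} \left| \frac{1}{n}\sum_{i=1}^n \epsilon_i\, g_A(X_i,\xi_i) \right| \leq 8\alpha\, \Eb \sup_{A \in \mathscr{F}_{\alpha,\beta}} \left|\left\langle \Rc, A \right\rangle\right| \leq 8\alpha\beta\, \Eb\|\Rc\|,
\]
recovering precisely the $\Eb\|\Rc\|$-term on the right-hand side of the lemma.

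Finally, I would upgrade the fixed-scale estimate to a uniform one by dyadic peeling. The constraint $\|A\|_{w(F)}^2 = 1$ together with Assumption~\ref{assumption:X_i}(3) gives $\|A\|_\infty \leq \sqrt{\mu m_1 m_2}$ and $\|A\|_* \leq \sqrt{m}\,\|A\|_F \leq \sqrt{m\mu m_1 m_2}$, so the dyadic grids $\alpha_k = 2^{-k}\sqrt{\mu m_1 m_2}$ and $\beta_\ell = 2^{-\ell}\sqrt{m\mu m_1 m_2}$ have $O((\log d+\log\mu)^2)$ cells down to the resolution $1/n$. Taking a union bound across cells and attributing each $A$ to the smallest dominating cell (so that the bound at that cell applies to $A$ with comparable $\alpha,\beta$ factors) produces the claim. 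The main obstacle will be the contraction step: one has to justify that the random indicators $\ind_{\chi_i}$ (which depend on $\xi_i$ but are independent of $A$) are compatible with Ledoux--Talagrand, resolved by conditioning on $\{\xi_i\}$ before contracting; a secondary but routine issue is the careful bookkeeping of constants through peeling so that the union-bound overhead is absorbed into the stated $\mu$-factors in the second and third terms.
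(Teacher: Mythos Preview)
Your proposal does not address the stated lemma. Lemma~\ref{lemma: CI for EPs} is Talagrand's functional concentration inequality, which the paper simply quotes from \cite{wainwright2019high} without proof; it is a general result about suprema of empirical processes indexed by an abstract function class $\mathscr{F}$ and has nothing to do with matrices, peeling, or the specific structure of the matrix-completion problem. What you have written is instead (a reasonable sketch of) the proof of Lemma~\ref{lemma: RSC Huber 1}, which \emph{applies} Lemma~\ref{lemma: CI for EPs} at each fixed scale $(\alpha,\beta)$, bounds the expected supremum via symmetrization and Ledoux--Talagrand contraction, and then peels over dyadic shells in $\|A\|_\infty$ and $\|A\|_*$.

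In particular, your own sentence ``Applying Lemma~\ref{lemma: CI for EPs} with $b=\alpha^2$, $\tilde\sigma^2=\alpha^2$\dots'' invokes as a black box the very statement you were asked to prove, and the remainder of the argument establishes a downstream consequence rather than the inequality itself. A genuine proof of Lemma~\ref{lemma: CI for EPs} would require Talagrand's concentration machinery for suprema of bounded empirical processes (e.g., the entropy method, Bousquet's inequality, or the arguments in Chapter~3 of \cite{wainwright2019high}); none of these ingredients appear in your proposal.
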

For  $(\alpha,\rho) \in \mathbb{R}^2$, we define the space:
\begin{equation*}
S(\alpha, \rho)=\{A: 
\Vert A\Vf^2=1, \Vert A\Vert_*\leq \rho, \Vert A\Vert_{\infty}\leq \alpha\}.
\end{equation*}
Denote
\begin{equation*}
 F_A(X_i,\xi_i)=G_{\frac{\tau}{4s}}(\la X_i, A\ra)\ind_{\chi_i}.   
\end{equation*}
for  $A\in S(\alpha,\rho)$. 
Recall $ f(A)=\Eb_{X_i,\xi_i}[F_A(X_i,\xi_i)] $.
Since $x^2\geq x^2\ind_{|x|\leq 2R}\geq G_{R}(x)$,  for $A\in S(\alpha,\rho)$,
\begin{align*}
\quad~ \Eb[F^2_A(X_i,\xi_i)]-f^2(A)
&\leq \Eb[F^2_A(X_i,\xi_i)]\\
&\leq \Eb[G^2_{\tau/4s}(\la X_i,A\ra)]\\
&\leq \alpha^2 \V A\Vf^2\\
&\leq \alpha^2,
\end{align*}
which implies $\tilde \sigma\leq \alpha$ in (\ref{inequality:empirical}).
Consider
\begin{equation*}
Z(\alpha,\rho)=\sup_{A\in S(\alpha,\rho)}\left|\dfrac{1}{n}\sumn F_A(X_i,\xi_i)-f(A)\right|
\end{equation*}
We have
\begin{align*}
\Eb[Z(\alpha,\rho)]&\stackrel{\text{i}}{\leq} \Eb\sup_{A\in\Omega(\alpha,\rho)}\left|  \dfrac{1}{n}\sumn \epsilon_iF_A(X_i,\xi_i)\right|\\
&\stackrel{\text{ii}}{\leq} 2\alpha \Eb\sup_{A\in\Omega(\alpha,\rho)}\left|\dfrac{1}{n}\sumn\epsilon_{i}\langle X_{i}\ind_{\chi_i},A\rangle\right|\\
&\stackrel{\text{iii}}{\leq} 4\alpha \rho \Eb\left\Vert \dfrac{1}{n}\sumn \epsilon_iX_i\ind_{\chi_i}\right\Vert.
\end{align*}
In step (i) we use symmetrization technique. 
By Property (ii) of $G_R(\cdot)$ and $ \Vert A\Vert_{\infty}\leq \alpha $, $ G_{\frac{\tau}{4s}}(\cdot) $ is Lipschitz with Lipschitz constant less than $ 2\alpha $.
Then, in step (ii) we apply  Talagrand-Ledoux contraction inequality.
Step (iii) is due to $ |\la A, B\ra|\leq \Vert A\Vert \Vert B\Vert_*  $.

We take $t=\frac{2\mu \log d}{n} $ in (\ref{inequality:empirical}). Since $\tilde \sigma \leq \alpha$ and $\|F_A(X_i,\xi_i)\|_\infty\leq \alpha^2 $, with probability at least  $ 1-\exp(-2\mu \log d)$, it holds
\begin{equation}\label{inequality: 1 RSC Huber 1}
Z(\alpha,\rho)\leq 4c_0 \alpha\rho \Eb[\|\Rc\|]
+\sqrt{2}c_1\alpha\sqrt{\frac{\mu\log d}{n}}
+4c_2\frac{\mu\alpha^2\log d}{n}.
\end{equation}
Note that under Assumption \ref{assumption:X_i},
\begin{equation*}
\|A\|_F\leq \sqrt{\mu m_1m_2}\|A\|_{w(F)}=\sqrt{\mu m_1m_2}.
\end{equation*}
and 
\begin{equation*}
\|A\|_F \geq  \|A\|_{w(F)}=1. \end{equation*}
Then, we have
\begin{equation*}
\V A\|_*\leq \sqrt{m}\V A\V_F\leq \sqrt{\mu d^3}
\end{equation*}
and 
\begin{equation*}
1\leq \|A\|_F\leq \|A\|_*.    
\end{equation*}
Without loss of generality, we can assume $\|A\|_*\in[1, \sqrt{\mu d^3}]$.
Similarly, we have
\begin{equation*}
  \|A\|_\infty\leq \|A\|_F\leq \sqrt{\mu m_1m_2}  
\end{equation*}
 and
 \begin{equation}
   \|A\|_{\infty}\geq \frac{\|A\|_F}{\sqrt{m_1m_2}}\geq \frac{1}{d}.  \label{ld:max norm}
 \end{equation}
Consider
\begin{align*}
\tilde S(k,l)=\{A;\|A\|_{w(F)}=1, 2^{k-1}\leq \V A\V_{\infty}\leq 2^{k},~\text{and}~2^{l-1}\leq \V A\|_*\leq2^{l}\}
\end{align*}
Every matrix $A$ with $\|A\|_{w(F)}=1$ belongs to some $ \tilde S(k,l) $ with $k=\lfloor \log_2 \frac{1}{d}\rfloor, \ldots$, $\lceil \frac{1}{2}\log_2 (\mu d^2 )\rceil$ and $ l= 1,\ldots, \lceil \log_2 \frac{1}{2}(\mu d^3)\rceil $. Thus the total number of the pairs $(k,l)$ is less than $\Lambda^2= 9\log_2^2 (\mu d) $. 

Denote the events by $ \mathcal{E} $ that a matrix $ A $ violates the following relation
\begin{align}\label{P1.1}
\notag&\left| \dfrac{1}{n}\sumn F_A(X_i,\xi_i)-f(A)\right|\\
&\leq 16c_0\|A\|_{\infty}\|A\|_*\Eb[\|\Rc\|]
+2\sqrt{2}c_1\|A\|_\infty\sqrt{\frac{\mu \log d}{n}}
+16c_2\frac{\mu\V A\V_{\infty}^2\log d}{n}
\end{align}
and denote $ \mathcal{E}_{k,l} $ the event that there exists a matrix in $ \tilde S(k,l) $ violating the following relation
\begin{align*}
\left| \dfrac{1}{n}\sumn F_A(X_i,\xi_i)-f(A)\right|
\leq 4c_0 2^k 2^l \Eb[\|\Rc\|]
+\sqrt{2}c_1 2^k \sqrt{\frac{\mu \log d}{n}}
+4c_2\frac{\mu\V A\V_{\infty}^2\log d}{n}.
\end{align*}
$ \mathcal{E} $ is contained in the union $ \cup_{k,l}\mathcal{E}_{k,l} $.
Indeed, letting $ \alpha=2^{k} $ and $ \rho=2^{l} $, for a matrix $ A $ in $ \tilde S(k,l) $, we have
\begin{align*}
&\left| \dfrac{1}{n}\sumn F_A(X_i,\xi_i)-f(A)\right|\\
&\geq 16c_0 \|A\|_{\infty}\|A\|_*\Eb[\|\Rc\|]
+2\sqrt{2}c_1 \|A\|_\infty\sqrt{\frac{\mu \log  d}{n}}
+16c_2\frac{\mu\V A\V_{\infty}^2\log d}{n}\\
&\geq 16c_02^{k-1}2^{l-1}\Eb[\|\Rc\|]
+2\sqrt{2}c_1 2^{k-1}\sqrt{\frac{\mu 
\log d}{n}}
+16c_2 2^{2(k-1)}\frac{\mu \log d}{n}\\
&=4c_0\alpha\rho
+\sqrt{2}\alpha\sqrt{\frac{\mu \log d}{n}}
+4c_2 \alpha^2\frac{\mu\log d}{n}.
\end{align*}
Thus, by (\ref{inequality: 1 RSC Huber 1}), it holds that
\begin{equation*}
\Pb(\mathcal{E})\leq \Pb(\cup_{k,l}\mathcal{E}_{k,l})\leq \Lambda^2 \exp(-2\mu\log d)\leq \frac{9\log_2^2 \mu d}{d^{\mu}} \cdot\frac{1}{d^{\mu}}\leq \frac{1}{d}
\end{equation*}
for sufficiently large $d$.
\end{proof}


\subsection{Proof of Theorem \ref{theorem: new lasso}}\label{Pf2.3}

From Appendix A in \cite{klopp2014noisy}, the following inequalities hold 
\begin{align}
\label{PF2.3eq1}  &\sumi  \la X_i, \Ah-A_0\ra^2\leq \frac{5}{3}\lambda\sqrt{r}\|\Ah-A_0\|_F,\\
\label{PF2.3eq2} &\|\Ah-A_0\|_*\leq \sqrt{72r}\|\Ah-A_0\|_F,
\end{align}
given
\begin{equation*}
     \lambda\geq 3\left\|\sxix\right\|.
\end{equation*}

Recall the definition
\begin{equation*}
    \|A\|_{w(F)}=\sqrt{\sum_{i=1}^{m_1}\sum_{j=1}^{m_2}P_{ij}a^2_{ij}}.
\end{equation*} 
Now we state a technical lemma.
\begin{lemma}\label{lemma:rsc L2}

For all matrices $A$, uniformly with probability at least $1-\frac{1}{d}$, we have
\begin{equation*}
  \dfrac{1}{n}\sumn \la X_i,A\ra^2\geq \frac{4}5{}\|A\|^2_{w(F)}-C\lb\|A\|_\infty\|A\|_*\Eb[\|\Rc\|]+\frac{\mu\V A\V_{\infty}^2\log d}{n} \rb,   
\end{equation*}
where $C$ is a universal  constant. Here  $\Rc=\sumi \epsilon_i X_i $, and $\{\epsilon_i\}_{i=1}^n$ are symmetric Bernoulli random variables independent of $\{(X_i,Y_i)\}_{i=1}^n$.

\end{lemma}
By Lemma \ref{lemma:rsc L2},  the following holds with probability at least $1-\frac{1}{d}$, 
\begin{align}
\notag &\dfrac{1}{n}\sumn \la X_i,\Ah-A_0\ra^2\geq \frac{4}{5}\|\Ah-A_0\|^2_{w(F)}\\
&- C\lb\|\Ah-A_0\|_\infty\|\hat{A}-A_0\|_*\Eb[\|\Rc\|]
+\frac{\mu\V \Ah-A_0\V_{\infty}^2\log d}{n} \rb. \label{PF2.3eq3}
\end{align}

Since $\|\Ah-A_0\|_\infty\leq 2a$ and (\ref{PF2.3eq2}) holds, we have
\begin{align*}
\lambda\sqrt{r}\|\Ah-A_0\|_F&\leq 5\mu\lambda^2 rm_1m_2 +\frac{\|\Ah-A_0\|^2_{F}}{5\mu m_1m_2}\\
&\leq 5\mu\lambda^2 rm_1m_2+\frac{1}{5}\|\Ah-A_0\|^2_{w(F)},\\
 \frac{\mu  \|\Ah-A_0\|^2_\infty \log d}{n}
&\leq \frac{4\mu a^2\log d}{n},\\   
\|\Ah-A_0\|_\infty\|\Ah-A_0\|_*\Eb[\|\Rc\|]
&\leq \sqrt{72}\|\Ah-A_0\|_\infty \sqrt{r}\,\|\Ah-A_0\|_F\Eb[\|\Rc\|] \\
&\leq  360C\mu a^2 r m_1 m_2\, (\Eb[\|\Rc\|])^2 
    + \frac{1}{5C}\|\Ah-A_0\|^2_{w(F)},
\end{align*}
where  we use the fact $\frac{\|A\|_F^2}{\mu m_1m_2}\leq \|A\|_{w(F)}^2$ and Cauchy-Schwarz inequality. 
Combining the above three bounds with (\ref{PF2.3eq1}) and (\ref{PF2.3eq3}), the following holds with probability at least $1-\frac{1}{d}$
\begin{equation}\label{P2.3}
\|\Ah-A_0\|^2_{w(F)}\lesssim   \mu rm_1 m_2 \lb  \lambda^2 + a^2(\Eb[\|\Rc\|])^2 \rb+\frac{\mu a^2 \log d}{n}.
\end{equation}
By (\ref{inequality:nasym-spectral}) in Lemma \ref{lemma: sharp spectral},  given $n\geq m\log^{4}d\cdot(\log d+\log n)$, the following holds with probability at least $1-\frac{1}{d}$,
\begin{equation*}
    \lnorm\sxix\rnorm\leq C_1\sigma\sqrt{\frac{1}{nm}},
\end{equation*}
where $C_1$ is a constant dependent on $\{ L_i\}_{i=1}^3$.
Then, we can take $\lambda=3C_1 \sigma\sqrt{\frac{1}{nm}}$ so that $\lambda\geq 3\|\sxix\|$.
By (\ref{inequality:E-spectral}) in Lemma \ref{lemma: sharp spectral}, given $n\geq m\log^4 d$, we have
\begin{equation*}
\Eb[\lnorm\Rc\rnorm]\lesssim\sqrt{\frac{1}{nm}}.
\end{equation*}
Plugging  $\lambda$ and the bound for $\Eb[\|\Rc\|]$ into (\ref{P2.3}),     we  obtain
\begin{equation*}
 \frac{\|\Ah-A_0\|^2_F}{m_1m_2}\lesssim\frac{\mu^2 \max(a^2,\sigma^2)rM}{n}+\frac{\mu^2a^2\log d}{n},   
\end{equation*}
which leads to the statement of Theorem \ref{theorem: new lasso}.

\subsection{Proof of restricted strong convexity for the quadratic loss function}
\label{section:rsc quadratic}

\begin{proof}[\textbf{Proof of Lemma \ref{lemma:rsc L2}}]
At first, let us consider $\|A\|_{w(F)}=1$.
Denote 
\begin{align*}
&Z_\tau(A)=  \left| \dfrac{1}{n}\sumn G_{\frac{\tau}{4s}}(\la X_i,A\ra )\ind_{\chi_i}-f_\tau(A)\right|,\\
&Z=\left| \dfrac{1}{n}\sumn \la X_i,A\ra^2-1\right|,\\
&\kappa_{\tau}(A)=8c_0\|A\|_\infty\|A\|_*\Eb[\|\Rc_\tau\|]
+2\sqrt{2}c_1\|A\|_\infty\sqrt{\frac{\mu \log d}{n}}
+8c_2\frac{\mu\V A\V_{\infty}^2\log d}{n},\\
&\kappa(A)= 8c_0\|A\|_\infty\|A\|_*\Eb[\|\Rc\|]
+2\sqrt{2}c_1\|A\|_\infty\sqrt{\frac{\mu \log d}{n}}
+8c_2\frac{\mu\V A\V_{\infty}^2\log d}{n},
\end{align*}
where 
\begin{equation*}
    \Rc_{\tau}=\sumi \epsilon_i X_i \ind(|\xi_i|\leq \tau/2),
\end{equation*}
and
\begin{equation*}
    	f_\tau(A):=\Eb_{X_i,\xi_i} [G_{\frac{\tau}{4s}} (X_i,A)\ind(|\xi_i|\leq \tau/2)].
\end{equation*}
Then, we consider the variables
\begin{align*}
&Y_\tau= \sup_{\|A\|_{w(F)}=1} \frac{Z_\tau(A)}{\kappa_\tau(A)},\\
&Y= \sup_{\|A\|_{w(F)}=1} \frac{Z}{\kappa(A)}.
\end{align*}
By Lemma \ref{lemma: RSC Huber}, we have, for $\tau>0$,
\begin{equation*}
 \Pb(Y_\tau\leq 1)\geq 1-\frac{1}{d}.   
\end{equation*}
Given $\|A\|_{w(F)}=1$, we have that $\kappa_\tau(A)$ is uniformly lower bounded away from zero by (\ref{ld:max norm}):
\begin{align*}
   \kappa_\tau(A)&\geq  2\sqrt{2}c_1\|A\|_\infty\sqrt{\frac{\mu \log d}{n}}
+8c_2\frac{\mu\V A\V_{\infty}^2\log d}{n}\\
&\geq \frac{2\sqrt{2}}{d}\cdot\sqrt{\frac{\mu\log d}{n}}+\frac{8c_2\mu\log d}{d^2n}.
\end{align*}
Since $\|\Rc_\tau\|\leq n$, by bounded convergence theorem, we obtain that  as $\tau$ tends to infinity, $\Eb[\|\Rc_\tau\|]$ converges to $\Eb[\|\Rc\|]$.
Note 
\begin{align*}
  \left| \dfrac{1}{n}\sumn G_{\frac{\tau}{4s}}(\la X_i,A\ra )\ind_{\chi_i} -\dfrac{1}{n}\sumn \la X_i,A\ra^2\right|&\leq C_1sa^2\sumi
\ind\left(|\xi_i|>\frac{\tau}{2}\right),\\
\left|f_\tau(A)- 1 \right|&\leq  C_2sa^2/\tau,
\end{align*}
where $C_1$ and $C_2$ are constants that are not dependent on $\tau$.
$Y_\tau$ converges  to $Y$ in probability and thus in distribution. There exist constants y dense in $\Rb$ such that
\begin{align*}
 \lim_{\tau\rightarrow\infty}\Pb(Y_\tau\leq y)=\Pb(Y\leq y). 
\end{align*}
For $y\geq 1$, by Lemma \ref{lemma: RSC Huber 1}, it holds that
\begin{equation*}
 \Pb(Y_\tau\leq y)\geq1-\frac{1}{d}   
\end{equation*}
for all $\tau$. 
Thus, there exists some $y\geq1$ such that
\begin{equation*}
    \Pb(Y\leq y)\geq1-\frac{1}{d}.
\end{equation*}

Then we have the following result. 
For matrices $A$ such that $\|A\|_{w(F)}=1$,  there is a constant $C$ dependent  on $ \{c_i\}_{i=0}^2 $ such that 
\begin{align*}
\notag&\left| \dfrac{1}{n}\sumn \la X_i,A\ra^2-1\right|\\
&\leq C\lb\|A\|_\infty\|A\|_*\Eb[\|\Rc\|]
+\|A\|_\infty\sqrt{\frac{\mu \log d}{n}}
+\frac{\mu\V A\V_{\infty}^2\log d}{n} \rb, 
\end{align*}
uniformly with probability at least $ 1-\frac{1}{d} $. 
For the general case, we  apply the above inequality to $A/\|A\|_{w(F)}$ and multiply $\|A\|^2_{w(F)}$ on both side. Then, we  obtain
\begin{align*}
    \notag&\left| \dfrac{1}{n}\sumn \la X_i,A\ra^2 -  \|A\|^2_{w(F)}  \right|\\
&\leq C\lb\|A\|_\infty\|A\|_*\Eb[\|\Rc\|]
+\|A\|_{w(F)}\|A\|_\infty\sqrt{\frac{\mu \log d}{n}}
+\frac{\mu\V A\V_{\infty}^2\log d}{n} \rb,
\end{align*}
which implies
\begin{align}
\notag   \dfrac{1}{n}\sumn \la X_i,A\ra^2\geq
    &\|A\|^2_{w(F)}
   - C\Bigg(\|A\|_\infty\|A\|_*\Eb[\|\Rc\|]\\
   &+\|A\|_{w(F)}\|A\|_\infty\sqrt{\frac{\mu \log d}{n}}+\frac{\mu\V A\V_{\infty}^2\log d}{n} \Bigg). \label{inequality:rsc L2}
\end{align}
For deviation error term $\|A\|_{w(F)}\|A\|_\infty\sqrt{\frac{\mu \log d}{n}}$, we apply Cauchy-Schwartz inequality 
\begin{equation*}
     C\|A\|_{w(F)}\|A\|_\infty\sqrt{\frac{\mu \log d}{n}}\leq \frac{1}{5}\|A\|^2_{w(F)}+5C^2\|A\|^2_{\infty}\frac{\mu \log d}{n}.
\end{equation*}
Plugging the above bound into (\ref{inequality:rsc L2}) leads to the statement of Lemma \ref{lemma:rsc L2}.

\end{proof} 

\subsection{Proof of Theorem \ref{theorem: new sq lasso}}
In this section, we set
\begin{equation*}
    \xi=(\xi_1/\sqrt{n},\ldots,\xi_n/\sqrt{n}).
\end{equation*}
From Appendix D in \cite{klopp2014noisy}, given
\begin{equation}\label{Psq1} 
   \lambda\geq \frac{3\|\sxix\|}{\|\xi\|_2}, 
\end{equation} 
we have
\begin{equation}\label{inequality: lr As}
    \|\Ah_S-A_0\|_*\leq \sqrt{9r}\|\Ah_S-A_0\|_F.
\end{equation}
Again, from Appendix D in \cite{klopp2014noisy}, we have
\begin{equation*}
  \sumi\la X_i, \As-A_0\ra^2 \leq \frac{14}{3} \lambda \|\xi\|_2 \sqrt{2 r} \dasf + \frac{1}{4}\|\As-A_0\|^2_{w(F)}, 
\end{equation*}
under  condition \( 4 \mu m_1 m_2 \lambda^2 r \leq 1/4 \).

Recall the definition
\begin{equation*}
    \|A\|_{w(F)}=\sqrt{\sum_{i=1}^{m_1}\sum_{j=1}^{m_2}P_{ij}a^2_{ij}}.
\end{equation*} 

By Lemma \ref{lemma:rsc L2}, we obtain

\begin{align*}
\frac{4}{5}\daswf^2&\leq C(\|\As-A_0\|_\infty\|\As-A_0\|_*\Eb[\|\Rc\|]\\
&+\frac{\mu \|\As-A_0\|^2_\infty\log d}{n})
+\frac{14}{3} \lambda \|\xi\|_2 \sqrt{2 r} \dasf \\
&+ \frac{1}{4}\|\As-A_0\|^2_{w(F)}.
\end{align*}

Using $\|\As-A_0\|_{\infty}\leq 2a$ and (\ref{inequality: lr As}), 
and repeating the argument in the proof of Theorem \ref{theorem: new lasso}, we derive
\begin{align}\label{P3.1}
\daswf^2\lesssim  \mu m_1 m_2  r\lb \|\xi\|^2_2\lambda^2 + a^2  (\Eb[\|\Rc\|])^2 \rb  +\frac{\mu a^2\log d}{n},
\end{align}
where $\Rc=\sumi \epsilon_iX_i$ and $\{\epsilon_i\}_{i=1}^n$ are i.i.d. symmetric Bernoulli random variables independent of $\{(X_i,Y_i)\}_{i=1}^n$.
By Theorem 3.1.1 in \cite{vershynin2018high}, 
we have
\begin{equation*}
\Pb\lbr\left|\lnorm \xi\rnorm_2-\sigma\right|\geq L_1\sigma t \rbr\leq 2\exp(-C_1nt^2),    
\end{equation*}
under Assumption \ref{assumption:noise-subgaussian}.
Taking $t=0.1/L_1$, we obtain that with probability at least $1-2\exp(-C_1n/(100L^2_1))$, the following holds
\begin{equation*}
 0.9\sigma\leq     \|\xi\|_2 \leq 1.1\sigma.
\end{equation*}
By  (\ref{inequality:nasym-spectral}) in Lemma \ref{lemma: sharp spectral}, given $n\geq m\log^{4}d\cdot(\log d+\log n)$,  the following holds with probability at least $1-\frac{1}{d}$
\begin{equation*}
    \lnorm\sxix\rnorm \leq C_2\sigma\sqrt{\frac{1}{nm}},
\end{equation*}
for some constant $C_2$. 
Thus, with probability at least $1-\frac{1}{d}-2\exp(-C_1n/(100L^2_1))$, 
\begin{equation*}
   \frac{3\|\sxix\|}{\|\xi\|_2}\leq 3.6C_2\sqrt{\frac{1}{nm}}.
\end{equation*}
Thus, we can take $\lambda=3.6C_2\sqrt{\frac{1}{nm}}$ so that (\ref{Psq1}) is satisfied.
By (\ref{inequality:E-spectral}) in Lemma \ref{lemma: sharp spectral}, we have
\begin{equation*}
\Eb\left[\lnorm\sumi\epsilon_iX_i\rnorm\right]\leq C_3\sqrt{\frac{1}{nm}},
\end{equation*}
for some constant $C_3$.
Plugging $\lambda$ and the above the bounds of  $\Eb\left[\lnorm\sumi\epsilon_iX_i\rnorm\right]$ into (\ref{P3.1}), we obtain that with probability at least $1-\frac{2}{d}-2\exp(-C_1n/(100L_1^2))$, the following holds
\begin{equation}\label{AS:bd1}
    \frac{\|\hat{A}_S - A_0\|_2^2}{m_1 m_2} 
\lesssim  \frac{\mu^2\max(a^2,\sigma^2)rM}{n}+\frac{\mu^2a^2\log d}{n},
\end{equation}
where the constants are dependent on $\{L_i\}_{i=1}^3$.
This leads to the error bound in Theorem \ref{theorem: new sq lasso} as the second term above on RHS is less than the first term.

The requirement $ 4 \mu m_1 m_2 \lambda^2 r \leq 1/4$ holds when
\begin{equation*}
 n\geq 576C^2_2\mu M r.
\end{equation*}
For a sufficiently large universal constant $C_4$, given $n\geq C_4L_1^2\log d$, we have
\begin{equation*}
   2\exp(-C_1n/(100L^2_1))\leq \frac{1}{d}. 
\end{equation*}
Thus, (\ref{AS:bd1}) holds with probability at least $1-\frac{3}{d}$ given
\begin{equation*}
  n\geq C_5\mu M r,  
\end{equation*}
where $C_5$ is a constant depending on $\{L_i\}_{i=1}^3$.


\subsection{Sharp bounds for spectral norms}
\label{section:spn}
 We first present the  advanced inequalities introduced in \cite{brailovskaya2024universality}. Let $ Q=\sum_{i=1}^nQ_i \in\mathbb{R}^{m_1\times m_2}$ be a random matrix where $ Q_i $ are independent random matrices with zero mean. 
We define
\begin{align*}
	\gamma(Q) &=\left(\max (\Vert \Eb [QQ^T]\Vert, \Vert \Eb[Q^TQ]\Vert)\right)^{1/2},\\
	\gamma_{*}(Q) &=\sup _{\|y\|_2=\|z\|_2=1} \left(\Eb[(y^TQz)^2]\right)^{1/2},  \\
	g(Q) &=\|\operatorname{Cov}(Q)\|^{1/2}, \\
	  R(Q) & =  \lnorm\max_{1 \leq i \leq n} \|Q_i\|\rnorm_{L^\infty} ,
\end{align*}
where $ \cov(Q)\in \Rb^{m_1m_2\times m_1m_2} $ is the covariance matrix of all the entries of $ Q $ and $\|\cdot\|_{L^\infty}$ is the essential supremum of a random variable.
\begin{theorem}[Corollary 2.17 in \cite{brailovskaya2024universality}]\label{theorem:matrix-concentration}
With the above notations, there exist a universal constant $ C_1 $ such that for $t\geq 0$
	\begin{align}
\notag		\Pb( \Vert Q\Vert &\geq2\gamma(Q)+
		C_1(g(Q)^{1/2} \gamma(Q)^{1/2}\log^{3/4} d\\
        &+\gamma_{*}(Q) t^{1/2}+R(Q)^{1/3} \gamma(Q)^{2/3} t^{2/3}+R(Q) t) )\leq de^{-t}   \label{inequality: vanhandel non-asym}
	\end{align}
and a universal constant $ C_2 $ such that
\begin{align}
\notag \Eb[\V Q\V]\leq 2\gamma(Q)+C_2(g(Q)^{1/2} \gamma(Q)^{1/2}\log^{3/4} d\\
+R(Q)^{1/3} \gamma(Q)^{2/3} \log ^{2/3}d+R(Q)\log d). \label{inequality:  vanhandel E}
\end{align}
\end{theorem}
\begin{remark}
The  matrix inequalities in \cite{brailovskaya2024universality} are formulated  for $Q_i\in \mathbb{R}^{d\times d}$ (not necessarily self-adjoint), and can be extended to rectangular matrices in the above form. 
See Remark 2.1 in \cite{brailovskaya2024universality}.

Theorem \ref{theorem:matrix-concentration} requires $Q_i$ to have bounded spectral norms, which is too restrictive in the context of matrix completion.
To leverage Theorem \ref{theorem:matrix-concentration}, we need to truncate the noise variables.
\cite{brailovskaya2024universality} also provides concentration inequalities  for the unbounded case (See Theorem 2.8 in \cite{brailovskaya2024universality}) that is derived by truncation techniques.
However, applying Theorem 2.8 in \cite{brailovskaya2024universality} could result in worse sample size conditions as \cite{brailovskaya2024universality} considers more general conditions.
For matrix completion, we need derive  specially tailored concentration inequalities  from Theorem \ref{theorem:matrix-concentration}.
\end{remark}

To present assumptions on the noise, we introduce a class of random variables that have exponential tail probability decays. A random variable $ Y $ is called $ \psi_\alpha $ random variable if it has finite   $ \psi_\alpha $ norm: 
\begin{equation*}
\Vert Y\Vert_{\psi_\alpha}=\inf\left\{C>0; \Eb\exp\left(\left(\frac{|Y|}{C}\right)^\alpha\right)\leq 2\right\}.
\end{equation*}
In particular, when $\alpha=2$, $\|\cdot\|_{\psi_2}$ is called sub-gaussian norm and X is called sub-gaussian random variable.

\begin{lemma}\label{lemma: sharp spectral}
Let $\{\epsilon_i\}_{i=1}^n$ be symmetric Bernoulli random variables that are independent of $\{(X_i,Y_i)\}_{i=1}^n$.

(1) Assume the sampling matrices  $X_i$ satisfy Assumption \ref{assumption:X_i} and  $\xi_i$ are $\psi_\alpha$ random variables with $\Eb[\xi_i]=0$, $\Eb[\xi_i^2|X_i]\leq \sigma^2$ and have uniformly bounded $\psi_\alpha$ norms
\begin{equation*}
 \lnorm [\xi_i|X_i]\rnorm_{\psi_\alpha} \leq L_1\sigma. 
\end{equation*}
The following inequality holds with probability at least $1-\frac{1}{d}$,
\begin{equation}\label{inequality:nasym-spectral}
	\left\| \dfrac{1}{n}\sum_{i=1}^n \xi_i X_i\right\|\leq C\sigma\sqrt{\dfrac{1}{nm}},
\end{equation}
given $n\geq m \log^4 d \cdot (\log^{2/\alpha}n+\log^{2/\alpha}d)$. 
In particular, for sub-Gaussian noise, (\ref{inequality:nasym-spectral}) holds given $n\geq m \log^4 d \cdot (\log n+\log d)$.
Here $C$ is a constant dependent on $\{L_i\}_{i=1}^3$.

(2)Assume $\xi_i$ satisfy Assumption \ref{assumption: mean var} and are symmetric and $X_i$ satisfy Assumption \ref{assumption:X_i}.  
With $\tau =\frac{\max(\sigma,a)}{\log^2 d}\sqrt{\frac{n}{m}}$, the following inequality holds with probability at least $1-\frac{1}{d}$,
\begin{equation}\label{inequality:nasym-spectral-bounded}
 \left\| \dfrac{1}{n}\sum_{i=1}^n \phi_\tau(\xi_i) X_i\right\|\leq C\max(\sigma,a)\sqrt{\dfrac{1}{nm}}.
\end{equation}
where $C$ is dependent on $L_2$ and $L_3$.

(3) If $ n\geq m\log^4 d   $, we have the following result:
\begin{align}\label{inequality:E-spectral}
\Eb\left\Vert \dfrac{1}{n}\sumn \epsilon_iX_i\right\Vert\leq C\sqrt{\frac{1}{nm}},
\end{align}
where $C$ is dependent on $L_2$ and $L_3$.

(4) If $n\geq m\log^4 d$, we have the following inequality:
\begin{equation}\label{inequality: E-spectral heavy}
    \Eb\left\Vert \dfrac{1}{n}\sumn \epsilon_iX_i\ind_{\chi_i}\right\Vert\leq C\sqrt{\frac{1}{nm}},
\end{equation}
where $\chi_i=\{|\xi_i|\leq \tau/2\}$ and C is a constant dependent on $L_2$ and $L_3$.
\end{lemma}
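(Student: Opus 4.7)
}
All four bounds will follow from Theorem \ref{theorem:matrix-concentration} applied to the scaled sums $Q=\frac{1}{n}\sum_i Q_i$ with $Q_i$ chosen in each case. The first step is a uniform computation of the four functionals $\gamma,\gamma_*,g,R$ exploiting the structure of $X_i$ (a single nonzero entry). Since $X_iX_i^T=e_j(m_1)e_j(m_1)^T$ when $X_i=e_j(m_1)e_k(m_2)^T$, Conditions (1)--(2) of Assumption \ref{assumption:X_i} give $\|\Eb[X_iX_i^T]\|\vee\|\Eb[X_i^TX_i]\|\leq L_2/m$; Condition (4) gives $\max_{j,k}P_{jk}\leq L_3/(m\log^3 d)$, which is crucial because the covariance matrix of $X_i$'s entries (and hence of $Q$) is \emph{diagonal}, so $g(Q)^2=\max_{j,k}\var((Q)_{jk})$. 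These two facts will respectively control $\gamma(Q)$ and $g(Q)$ in every case.

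For parts (3) and (4), each summand $\epsilon_iX_i/n$ (with or without the indicator $\ind_{\chi_i}$) has spectral norm at most $1/n$, so Theorem \ref{theorem:matrix-concentration} applies directly with $R\leq 1/n$, $\gamma\lesssim 1/\sqrt{nm}$, $g\lesssim 1/\sqrt{nm\log^3 d}$. Plugging into (\ref{inequality:  vanhandel E}), the $g^{1/2}\gamma^{1/2}\log^{3/4}d$ term exactly matches $1/\sqrt{nm}$ (the $\log^{3/4}d$ in Theorem \ref{theorem:matrix-concentration} cancels the $\log^{-3/4}d$ borrowed from $g$), while the condition $n\geq m\log^4 d$ is precisely what forces $R^{1/3}\gamma^{2/3}\log^{2/3}d$ and $R\log d$ to be $\lesssim 1/\sqrt{nm}$. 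For (4) the indicator only shrinks all four quantities, so the same bound persists. For part (2), the symmetry of $\xi_i|X_i$ makes $\phi_\tau(\xi_i)$ mean zero conditionally, and $|\phi_\tau(\xi_i)|\leq\tau$ gives $R\leq\tau/n$; the bounded second moment $\Eb[\xi_i^2|X_i]\leq\sigma^2$ upgrades $\gamma\lesssim\sigma/\sqrt{nm}$ and $g,\gamma_*\lesssim\sigma/\sqrt{nm\log^3 d}$. Applying (\ref{inequality: vanhandel non-asym}) with $t=\log d$, the choice $\tau=\max(\sigma,a)\sqrt{n/m}/\log^2 d$ is exactly calibrated so that $R^{1/3}\gamma^{2/3}\log^{2/3}d$ and $R\log d$ are both at the target level $\max(\sigma,a)/\sqrt{nm}$.

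Part (1) is where the work lies, because $\xi_i$ is unbounded and Theorem \ref{theorem:matrix-concentration} cannot be invoked directly. The plan is a centered truncation at $T=C L_1\sigma(\log^{1/\alpha}n+\log^{1/\alpha}d)$, chosen so that the union bound $\Pb(\max_i|\xi_i|>T)\leq 2n\exp(-(T/L_1\sigma)^\alpha)\leq 1/(2d)$ holds. Write
\begin{equation*}
\xi_iX_i=\underbrace{(\xi_i\ind_{|\xi_i|\leq T}-\Eb[\xi_i\ind_{|\xi_i|\leq T}\mid X_i])X_i}_{\text{centered bounded piece }\tilde\xi_i X_i}
+\underbrace{\Eb[\xi_i\ind_{|\xi_i|\leq T}\mid X_i]X_i}_{\text{tiny deterministic bias}}
+\underbrace{\xi_i\ind_{|\xi_i|>T}X_i}_{\text{vanishes on the good event}}.
\end{equation*}
On the good event the tail piece is $0$. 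The bias is uniformly $|\Eb[\xi_i\ind_{|\xi_i|>T}|X_i]|\leq\sigma\exp(-(T/L_1\sigma)^\alpha/2)$ (Cauchy--Schwarz), which is negligible. For the centered piece $|\tilde\xi_i|\leq 2T$ and the argument of part (2) applies verbatim with $\tau$ replaced by $2T$; the sample-size requirement $n\geq m\log^4 d\,(\log^{2/\alpha}n+\log^{2/\alpha}d)$ is exactly what makes the $R^{1/3}\gamma^{2/3}\log^{2/3}d$ term in Theorem \ref{theorem:matrix-concentration} comparable to $\sigma/\sqrt{nm}$, because $R\log^2 d\lesssim T\log^2 d/n$ and we need $T^2\log^4 d/n\lesssim\sigma^2/m$.

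The main obstacle I anticipate is the bookkeeping in part (1): juggling the tail probability, the bias, and the matrix concentration simultaneously while producing the \emph{sharp} sample-size condition $n\geq m\log^4 d\,(\log^{2/\alpha}n+\log^{2/\alpha}d)$. Everything else is a relatively clean translation of the structural features of $X_i$ (rank-one sampling matrices satisfying Assumption \ref{assumption:X_i}) into the four quantities $\gamma,\gamma_*,g,R$ required by Theorem \ref{theorem:matrix-concentration}; once those are in hand, the four bounds fall out after plugging $t=\log d$ and using the stated condition on $n$ to dominate the subleading terms.
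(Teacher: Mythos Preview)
Your proposal is correct and follows essentially the same route as the paper: compute $\gamma,\gamma_*,g,R$ using Conditions (1)--(2) for $\gamma$ and Condition (4) for the diagonal covariance controlling $g$ and $\gamma_*$, then invoke Theorem \ref{theorem:matrix-concentration}; for part (1) the paper performs the identical centered truncation at level $\asymp\sigma(\log^{1/\alpha}n+\log^{1/\alpha}d)$ and handles the bias via Cauchy--Schwarz exactly as you describe. One small slip: in (\ref{inequality: vanhandel non-asym}) the failure probability is $de^{-t}$, so you must take $t\geq 2\log d$ (the paper uses $t=3\log d$) rather than $t=\log d$, and your ``deterministic bias'' is actually random in $X_i$ but uniformly bounded, which is all that is needed.
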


\begin{proof}[\textbf{Proof of Lemma \ref{lemma: sharp spectral}}]

\textbf{Proof of Part (1)}. Since Theorem \ref{theorem:matrix-concentration} requires $\|Q_i\|$ to be bounded, we need to truncate $\xi_i$ to $\xi_i\textbf{1}(|\xi_i|\leq\tau)$ for some $\tau\geq 0$. 
Define $\bar{\xi}_i=\xi_i\textbf{1}(|\xi_i|\leq\tau)-\Eb[\xi_i\textbf{1}(|\xi_i|\leq\tau)|X_i]$. 
Note $\Eb[\bar{\xi}_i]=0$ and $\Eb[\bar{\xi}_i^2]\leq \sigma^2$.
We now bound the four parameters for $ Q=\frac{1}{n}\sum_{i=1}^{n}\xb_iX_i $.
Define the event
\begin{equation*}
\Ec=\{X_i=e_j(m_1)e_k(m_2)^T\}.  
\end{equation*}
\noindent
1. Bounding $\gamma(Q)$:\\
Note $ \Eb[\xb^2_iX_i X_i^{T}] $ is a diagonal matrix. The elements on the diagonal line are $\sum_{k=1}^{m_2} P_{jk}\Eb[\xb_i^2|\Ec], j=1,2,\ldots, m_1$, which are upper bounded by  $  \frac{ L_2\sigma^2}{m} $.
The same bound holds for $ \Eb[ \xb_i^2X_i^{T}X_i] $. 
Since 
\begin{equation*}
 \Eb[QQ^T]=\frac{1}{n^2}\sum_{i=1}^n \Eb[\xb_i^2 X_iX_i^T],   
\end{equation*}
we obtain $\| E[QQ^T]\|\leq \frac{L_2\sigma^2}{nm}$.
Similarly, $\|\Eb[Q^TQ]\|\leq \frac{L_2\sigma^2}{nm}$.
Therefore, we have
\begin{equation*}
    \gamma(Q)\leq \sqrt{\frac{L_2\sigma^2}{nm}}.
\end{equation*}
\noindent
2. Bounding $\gamma_*(Q)$:\\
For $ y\in\mathbb{R}^{m_1}, z\in\mathbb{R}^{m_2} $ with $ \Vert y\Vert_2=\Vert z\Vert_2=1 $, we have the following result:
\begin{align*}
	\Eb(y^T\xb_iX_iz)^2&=\sum_{j=1}^{m_1}\sum_{k=1}^{m_2}P_{jk}\Eb[\xb_i^2|\Ec] y^2_jz^2_k\\
 &\leq \sigma^2\sum_{j=1}^{m_1}\sum_{k=1}^{m_2}P_{jk} y^2_jz^2_k\\
 &\leq  \dfrac{L_3\sigma^2}{m\log^3d}.
\end{align*}
where we use $\sum_{j=1}^{m_1}\sum_{k=1}^{m_2}y^2_jz^2_k=\Vert y\Vert_2^2\cdot \Vert z\Vert_2^2=1$ and $\max P_{jk}\leq \frac{L_3}{m\log^3 d}$. Since $ \xb_iX_i $ are independent and mean-zero, $ \Eb(y^TQ z)^2=\dfrac{1}{n^2}\sum_{i=1}^{n}\Eb\left(y^T\xb_iX_iz\right)^2\leq \frac{L_3\sigma^2}{nm\log^3 d} $.
Hence, we conclude
\begin{equation*}
\gamma_*(Q)\leq \sqrt{\frac{L_3\sigma^2}{nm\log^3 d}}.    
\end{equation*}
\noindent
3.Bounding $g(Q)$: \\
Since $ \xb_iX_i $'s are independent, we have
\begin{equation*}
 \cov(Q)=\dfrac{1}{n^2}\sum_{i=1}^{n}\cov(\xb_iX_i).   
\end{equation*}
Define $ \ind_{(j,k)}=\textbf{1}(X_i=e_j(m_1)e_k(m_2)^T) $ (for simplicity of notation, we do not include a subscript i) and denote $ I(j,k)=\xb_i\ind_{(j,k)} $. Note the following facts
\begin{align*}
&\Eb[I(j,k)]=P_{jk}\Eb[\xb_i|\Ec]=0,\\
&I(j,k)I(l,h)=0 ~~\text{if}~~(j,k)\neq (l,h).
\end{align*} 
Thus $ \text{Cov}\big(I(j,k),I(l,h)\big)=\Eb[I(j,k)I(l,h)]-\Eb[I(j,k)]E[I(l,h)]=0 $ for $ (j,k)\neq (l,h) $. 
Then we obtain 
\begin{equation*}
	\text{Cov}(\xb_i X_i)\Big((j,k),(l,h)\Big)=
	\begin{cases}
		\Eb[\xb^2_i|\Ec]P_{jk} ~~\text{for}~~(j,k)=(l,h),\\
		0 ~~~\text{for}~~~(j,k)\neq(l,h).
	\end{cases}
\end{equation*}
Since $ P_{jk}\leq \frac{L_3}{m\log^3 d} $, it follows that $ \V \cov(\xb_iX_i)\V\leq \frac{L_3\sigma^2}{m\log^3 d} $.
Thus, we have
\begin{equation*}
 g(Q)\leq \sqrt{\frac{L_3\sigma^2}{nm\log^3 d}}.   
\end{equation*}
\noindent
\noindent
4. Bounding $R(Q)$:\\
Obviously, $ \|\xb_iX_i\|=|\xb_i| $, and $ \max_{1\leq i\leq n} |\xb_i|\leq 2\tau $. Thus, we have
\begin{equation*}
    R(Q)\leq\frac{2\tau}{n}.
\end{equation*}

Now, we  plug the four estimates into (\ref{inequality: vanhandel non-asym}). Choosing $ t=3\log d $, we get,  with probability at least $ 1-\frac{1}{d^2} $, the following holds
\begin{align}
\notag  \V Q \V&\leq 2\sqrt{\frac{\sigma^2}{nm}}\\
\notag  &+C\Bigg(\left(\frac{\sigma^2}{nm}\right)^{1/2}\cdot\left(\dfrac{\log^{3/4}d}{\log^{3/4} d}\right)   +\left(\frac{\sigma^2}{nm}\right)^{1/2}\cdot \left( \dfrac{\log^{1/2} d}{\log^{3/2} d}\right)\\
\label{inequality: Q1}  &+\left(\frac{\log^{2} d \cdot\tau}{n}\right)^{1/3}\cdot\left(\frac{\sigma^2}{nm}\right)^{1/3}
+\dfrac{\log d\cdot\tau}{n}\Bigg),
\end{align}
where $C$ is dependent on $L_2$ and $ L_3$.
The quantities in the first line and the second line of RHS of (\ref{inequality: Q1}) are bounded by $C\sqrt{\frac{\sigma^2}{nm}}$.
When $\xi_i$ are $\psi_\alpha$ random variables, we set $\tau=C_\alpha \sigma(\log^{1/\alpha}n+ \log^{1/\alpha} d)$. 
By (\ref{inequality: max alpha}), with a properly chosen $C_\alpha$ , we have
\begin{equation*}
\Pb\left(\max_{1\leq i\leq n}|\xi_i|\geq C_\alpha\sigma\left( \log^{1/\alpha}n+ \log^{1/\alpha} d\right)\right)\leq \frac{1}{d^2}.    
\end{equation*}
To ensure  the quantities in the last line of (\ref{inequality: Q1})  are  bounded by  $C\sqrt{\frac{\sigma^2}{nm}}$, we need
\begin{align*}
\left(\frac{\sigma\log^2 d\cdot(\log^{1/\alpha}n+ \log^{1/\alpha} d) }{n}\right)^{1/3}\cdot\left(\frac{\sigma^2}{nm}\right)^{1/3}\leq \sqrt{\dfrac{\sigma^2}{nm}}, \\
\dfrac{\sigma\log d\cdot(\log^{1/\alpha}n+ \log^{1/\alpha} d)}{n}\leq \sqrt{\dfrac{\sigma^2}{nm}}.
\end{align*}
The above conditions imply
\begin{align*}
    n&\geq    m \log^4 d \cdot (\log^{2/\alpha} n+\log^{2/\alpha}d),\\
    n&\geq  m\log^2 d\cdot (\log^{2/\alpha} n+\log^{2/\alpha}d).
\end{align*}
On $ \{\max_{1\leq i\leq n} |\xi_i|\leq \tau\}$,  we have
\begin{align*}
\left\|\dfrac{1}{n}\sum_{i=1}^n\xi_iX_i \right\|&=\left\| Q+\dfrac{1}{n}\sum_{i=1}^n\Eb[\xi_i\textbf{1}(|\xi_i|\leq\tau)]X_i\right\|\\&\leq \|Q\|+\left\|\dfrac{1}{n}\sum_{i=1}^n\Eb[\xi_i\textbf{1}(|\xi_i|>\tau)]X_i\right\|\\
&\leq \|Q\|+\sumi|\Eb[\xi_i\textbf{1}(|\xi_i|>\tau)|X_i]|,
\end{align*}
where we use $\Eb[\xi_i\ind(\xi_i\leq \tau)]=\Eb[\xi_i\ind(\xi_i> \tau)]$ since $\Eb[\xi_i]=0$.
Let  $\tau=C_\alpha \sigma(\log^{1/\alpha}n+ \log^{1/\alpha} d)$ and  random variable $Y$ satisfy   $\|Y\|_{\psi_\alpha}\leq L_1\sigma$. There exists  $C_\alpha$ dependent on $L_1$ such that the following holds
\begin{align*}
 \Eb[ Y\ind(|Y|>\tau)  ]&\leq \Eb[|Y|\ind(|Y|>\tau)]\\
 &\leq (\Eb[Y^2])^{1/2}P^{1/2}(|Y|>\tau)\\
 &\leq \sigma \exp\left(-2\max(\log^{1/\alpha }n, \log^{1/\alpha}d)^\alpha\right)\\
 &\leq \frac{\sigma}{n}\leq\sigma \sqrt{\frac{1}{nm}},
\end{align*}
given $n\geq m$.
Combining the results, we obtain
\begin{align*}
&\quad~\Pb\left( \left\| \sxix \right\|\geq C_\alpha\sigma \sqrt\frac{1}{nm}\right)\\
&\leq \Pb\left( \left\| \sxix\right\|\geq C_\alpha\sigma \sqrt\frac{1}{nm},\max_{1\leq i\leq n} |\xi_i|\leq \tau\right)\\
&+\Pb\left(\max_{1\leq i\leq n} |\xi_i|> \tau\right)  \\
&\leq \frac{1}{d},
\end{align*}
given $n\geq    m \log^4 d \cdot (\log^{2/\alpha} n+\log^{2/\alpha}d)$.
In particular, for sub-Gaussian noise,  we have 
\begin{equation*}
 \Pb\left(\lnorm \sxix \rnorm\leq C\sigma\sqrt\frac{1}{nm} \right)\geq 1-\frac{1}{d}   
\end{equation*}
given $n\geq \log^4d \cdot(\log n+ \log d)$. So far, we conclude (\ref{inequality:nasym-spectral}).\\

\noindent
\textbf{Proof of Part (2).}
To prove the dimension free bound for $\sumi \phi_\tau(\xi_i)X_i$, we define  $Q_i=\frac{1}{n}\phi_\tau(\xi_i)X_i$ and $Q=\sum_{i=1}^n Q_i$.
Since $\xi_i$ are symmetric, $\Eb[\phi_\tau(\xi_i)|X_i]=0$. By (\ref{inequality: truc var}),  we  obtain $\Eb[\phi_\tau(\xi_i)^2|X_i]\leq \sigma^2$, $|\phi_{\tau}(\xi_i)|\leq \tau$.
Therefore, the estimates we have obtained earlier for the four parameters still apply.
Then, (\ref{inequality: Q1}) holds for $Q=\frac{1}{n}\phi_\tau(\xi_i)X_i$. 
With $\tau =\frac{\max(\sigma,a)}{\log^2 d}\sqrt{\frac{n}{m}}$ and $n\geq m\log^4 d$, we have 
\begin{align*}
\left(\frac{\log^{2} d }{n} \cdot\frac{\max(\sigma,a)}{\log^2 d}\sqrt{\frac{n}{m}}\right)^{1/3}\cdot\left(\frac{\sigma^2}{nm}\right)^{1/3}\leq\max(\sigma,a) \sqrt{\dfrac{1}{nm}}, \\
\dfrac{\log d}{n}\frac{\max(\sigma,a)}{\log^2 d}\sqrt{\frac{n}{m}}\leq \max(\sigma,a)\sqrt{\dfrac{1}{nm}}.
\end{align*}
Hence, we obtain (\ref{inequality:nasym-spectral-bounded}).\\

\noindent
\textbf{Proof of Part (3).}
Similarly, let $Q=\frac{1}{n}\sum_{i=1}^n \epsilon_i X_i$. Since $\epsilon_i\leq 1$, we have $R(Q)=\frac{1}{n}$. 
We have already estimated the four parameters above. 
By (\ref{inequality:  vanhandel E}), it remains to ensure that
\begin{align*}
\left(\frac{\log^{2} d }{n}\right)^{1/3}\cdot\left(\frac{1}{nm}\right)^{1/3}\leq C\sqrt{\dfrac{1}{nm}}, \\
\dfrac{\log d}{n}\leq C\sqrt{\dfrac{1}{nm}},
\end{align*}
which leads to the condition $n\geq m\log^4 d.$
This completes the proof of (\ref{inequality:E-spectral}).\\

\noindent
\textbf{Proof of Part (4).}
In the following, we define $Q_i=\frac{1}{n}\epsilon_iX_i\ind_{\chi_i}$ and $Q=\sum_{i=1}^n Q_i$.\\
\noindent
1.Bounding $\gamma(Q)$:\\ 
Note $Q_iQ_i^T=\frac{1}{n^2}\indc X_iX_i^T$. $\Eb[\frac{1}{n^2}\indc X_iX_i^T]$ is a diagonal matrix with diagonal elements $\sum_{k=1}^{m_2}\Pb(\chi_i|\Ec)P_{jk}$.
Thus, we have $\|Q_iQ_i^T\|\leq \frac{L_2}{n^2m}$ and $\|QQ^T\|\leq \frac{L_2}{nm}$.
Similarly, we can obtain $\|QQ^T\|\leq \frac{L_2}{nm}$.
Therefore,
\begin{equation*}
    \gamma(Q)\leq \sqrt{\frac{L_2}{nm}}.
\end{equation*}
\noindent
2. Bounding $ \gamma_*(Q)$:\\
For $ y\in\mathbb{R}^{m_1}, z\in\mathbb{R}^{m_2} $ with $ \Vert y\Vert_2=\Vert z\Vert_2=1 $, we have the following result:
\begin{align*}
	\Eb[(y^T\xb_iX_iz)^2]&=\sum_{j,k}P_{jk}\Pb(\indc|\Ec) y^2_jz^2_k\\
 &\leq \sum_{j,k}P_{jk} y^2_jz^2_k\\
 &\leq  \dfrac{L_3}{m\log^3d}.
\end{align*}
where we use $\sum_{j=1}^{m_1}\sum_{k=1}^{m_2}y^2_jz^2_k=\Vert y\Vert_2^2\cdot \Vert z\Vert_2^2=1$ and $\max P_{jk}\leq \frac{L_3}{m\log^3 d}$. 
Since $ Q_i $ are independent and mean-zero, we have
\begin{equation*}
    \Eb(y^TQ z)^2=\dfrac{1}{n^2}\sum_{i=1}^{n}\Eb[\left(y^T\epsilon_iX_i\indc z\right)^2]\leq \frac{L_3}{nm\log^3 d}.
\end{equation*}
Thus, we have
\begin{equation*}
     \gamma_*(Q)\leq \sqrt{\frac{L_3\sigma^2}{nm\log^3 d}} .
\end{equation*}
\noindent
3. Bounding $g(Q)$: \\
Since $ Q_i $ are independent, we have
\begin{equation*}
 \cov(Q)=\dfrac{1}{n^2}\sum_{i=1}^{n}\cov(\epsilon_iX_i\indc).  
\end{equation*}
Define $ \ind_{(j,k)}=\textbf{1}(X_i=e_j(m_1)e_k(m_2)^T) $ (for simplicity of notation, we do not include a subscript i) and denote $ I(j,k)=\epsilon_i\ind_{(j,k)}\indc $. 
Note the following facts
\begin{align*}
&\Eb[I(j,k)]=\Eb[\epsilon_i]\Eb[\ind_{(j,k)}\indc]=0,\\
&I(j,k)I(l,h)=0 ~~\text{if}~~(j,k)\neq (l,h).
\end{align*} 
Thus $ \text{Cov}\big(I(j,k),I(l,h)\big)=\Eb[I(j,k)I(l,h)]-\Eb[I(j,k)]E[I(l,h)]=0 $ for $ (j,k)\neq (l,h) $. 
Then we obtain 
\begin{equation*}
	\text{Cov}(\epsilon_iX_i\ind_{\chi_i})\Big((j,k),(l,h)\Big)=
	\begin{cases}
		\Pb(\chi_i|\Ec)P_{jk} ~~\text{for}~~(j,k)=(l,h),\\
		0 ~~~\text{for}~~~(j,k)\neq(l,h).
	\end{cases}
\end{equation*}
Since $ P_{jk}\leq \frac{L_3}{m\log^3 d} $, $ \V \cov(\xb_iX_1)\V\leq \frac{L_3}{m\log^3 d} $.
Thus, it holds
\begin{equation*}
     g(Q)\leq \sqrt{\frac{L_3}{nm\log^3 d}}.
\end{equation*}
4. Bounding $R(Q)$:\\
Note $ \|Q_i\|=|\indc|/n $. Thus,
\begin{equation*}
   R(Q)\leq \frac{2}{n}. 
\end{equation*}
\noindent
Plugging  the four estimates for $\sumi \epsilon_iX_i\indc$ into (\ref{inequality:  vanhandel E}),  we conclude (\ref{inequality: E-spectral heavy}) given $n\geq m\log^4 d$.
\end{proof}

\subsection{Concentration inequalities for the maximum of independent random variables}

Given $\|X\|_{\psi_\alpha}\leq \sigma$, it holds that for $t>0$ 
\begin{equation*}
    \Pb(|X|\geq \sigma t)\leq 2\exp(-t^{\alpha}). 
\end{equation*}
For a metric space $ (T,\rho) $, a sequence of its subsets $ \{T_m\}_{m\geq 0} $ is called an admissible sequence if $ |T_0|=1 $ and $ |T_m|\leq 2^{2^m} $ for $m\geq 1$. 
For $ 0<\alpha<\infty $, the functional $ \gamma_\alpha(T,\rho) $ is defined by 
\begin{equation*}
\gamma_\alpha(T,
\rho)=\inf_{\{T_m\}^{\infty}_{m=1}}\sup_{t\in T} \sum_{m=0}^{\infty} 2^{m/\alpha}\rho(t,T_m)
\end{equation*}
where the infimum is taken over all  admissible sequences. If a random process $ \{X_t, t\in T\} $ satisfies
\begin{equation*}
\Pb(|X_t-X_s|\geq t\rho(t,s))\leq 2\exp(-t^{\alpha}),
\end{equation*}
the following theorem holds.

\begin{theorem}[Theorem 3.2 in  \cite{dirksen2015tail}]
There exist constants $ C_{1,\alpha}, C_{2,\alpha} $ such that
\begin{equation}\label{in:dirksen}
\mathbb{P}\left(\sup _{t \in T}|X_t-X_{t_0}| \geq e^{1 / \alpha}\left(C_{1,\alpha}\gamma_\alpha(T, 
\rho)+t C_{2,\alpha} \Delta_\rho(T)\right)\right) \leq \exp \left(-t^\alpha / \alpha\right)
\end{equation}
where $ \Delta_{\rho}(T)=\sup_{t,s\in T} \rho(t,s) $, $t\geq 0$ and $C_{1,\alpha}, C_{2,\alpha}$ are dependent on $\alpha$.
\end{theorem}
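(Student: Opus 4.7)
The plan is to prove this via Talagrand's generic chaining, which is the standard machinery to convert a $\psi_\alpha$-type increment bound into a uniform deviation bound governed by $\gamma_\alpha(T,\rho)$. First I would fix, for each $\varepsilon > 0$, a near-optimal admissible sequence $\{T_m\}_{m\geq 0}$ realizing
\[
\sup_{t\in T}\sum_{m\geq 0}2^{m/\alpha}\rho(t,T_m)\leq (1+\varepsilon)\gamma_\alpha(T,\rho),
\]
with $T_0=\{t_0\}$ and $|T_m|\leq 2^{2^m}$. For each $t\in T$ let $\pi_m(t)\in T_m$ be a closest point to $t$. Assuming (the standard) separability so that $\pi_m(t)\to t$ and $X_{\pi_m(t)}\to X_t$ almost surely, I would write the chaining telescoping identity
\[
X_t-X_{t_0}=\sum_{m\geq 1}\bigl(X_{\pi_m(t)}-X_{\pi_{m-1}(t)}\bigr).
\]

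Second, I would control each link by the increment hypothesis. For any fixed pair $(s,s')$ with $s\in T_m$, $s'\in T_{m-1}$,
\[
\mathbb{P}\bigl(|X_s-X_{s'}|\geq u\,\rho(s,s')\bigr)\leq 2\exp(-u^\alpha).
\]
The union runs over at most $|T_m|\cdot|T_{m-1}|\leq 2^{2^{m+1}}$ pairs. I would take the threshold at scale $m$ to be
\[
u_m\;=\;e^{1/\alpha}\bigl(C_{1,\alpha}\cdot 2^{m/\alpha}+tC_{2,\alpha}\cdot 2^{-m/\alpha}\bigr),
\]
so that $u_m^\alpha$ cleanly splits into a $C_{1,\alpha}^\alpha\cdot 2^m\cdot e$ piece (absorbing the $2^{2^{m+1}}\log 2$ from the union bound with room to spare) and a $t^\alpha C_{2,\alpha}^\alpha / \alpha$ piece (which, after summing $2^{-m}$ geometrically, yields the tail $\exp(-t^\alpha/\alpha)$). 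Using $\rho(\pi_m(t),\pi_{m-1}(t))\leq \rho(t,T_m)+\rho(t,T_{m-1})\leq 2\rho(t,T_{m-1})$ and $\rho(\pi_m(t),\pi_{m-1}(t))\leq \Delta_\rho(T)$ inside the tail, the probability of the bad event on level $m$ is bounded by $2\cdot 2^{2^{m+1}}\exp(-u_m^\alpha)$; summing over $m\geq 1$ gives the $\exp(-t^\alpha/\alpha)$ bound with the constants $C_{1,\alpha}, C_{2,\alpha}$ appearing as stated.

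Third, on the complementary good event I would sum the chaining links deterministically:
\[
|X_t-X_{t_0}|\leq \sum_{m\geq 1}u_m\,\rho(\pi_m(t),\pi_{m-1}(t))
\leq 2\sum_{m\geq 1}u_m\,\rho(t,T_{m-1}).
\]
The $C_{1,\alpha}\cdot 2^{m/\alpha}$ part of $u_m$ yields $\;\leq e^{1/\alpha}C_{1,\alpha}\gamma_\alpha(T,\rho)\;$ by the defining property of the admissible sequence, while the $tC_{2,\alpha}\cdot 2^{-m/\alpha}\,\rho(t,T_{m-1})\leq tC_{2,\alpha}\cdot 2^{-m/\alpha}\Delta_\rho(T)$ part sums geometrically to $e^{1/\alpha}tC_{2,\alpha}\Delta_\rho(T)$ after adjusting the absolute constants, which is uniform in $t\in T$. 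Taking the supremum gives the claimed bound.

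The main obstacle is the delicate bookkeeping in the choice of $u_m$: the coefficient $2^{m/\alpha}$ must be large enough that $u_m^\alpha$ dominates the union-bound penalty $2^{m+1}\log 2$ (so that the $\gamma_\alpha$-part remains finite) while simultaneously leaving a residual exponential factor that sums to the prefactor $1/\alpha$ in $\exp(-t^\alpha/\alpha)$. The $e^{1/\alpha}$ prefactor is the cleanest way to absorb these two competing demands, and obtaining exactly this constant — rather than a weaker $C_\alpha$ — is the technical heart of the argument; compare Dirksen's Lemma~3.1 and the $\ell_\alpha$-inequality used to convert $(a^\alpha+b^\alpha)^{1/\alpha}\leq e^{1/\alpha}(a+b)$-style rearrangements. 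A separability/measurability step is also needed to justify the supremum over $t$, but this is standard.
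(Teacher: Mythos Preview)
The paper does not prove this statement at all: it is quoted verbatim as Theorem~3.2 of \cite{dirksen2015tail} and used as a black box to derive inequality~(\ref{inequality: max alpha}). There is therefore no ``paper's own proof'' to compare your proposal against. Your sketch is the standard generic chaining argument and is essentially the route Dirksen takes, so as a proof plan it is sound; but for the purposes of this paper no proof is expected or supplied.
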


Let $\{\xi_i\}_{i=1}^n $ be a sequence of independent $ \psi_\alpha $ random variables with $ \Vert \xi_i\Vert_{\psi_\alpha}\leq \sigma $.
By Lemma A.3 in  \cite{gotze2021concentration}, we have   
\begin{equation*}
	\Vert X+Y\Vert_{\psi_\alpha}\leq K_{\alpha}(\Vert X\Vert_{\psi_\alpha}+\Vert Y\Vert_{\psi_\alpha} ),
\end{equation*}
where $ K_{\alpha}=2^{1/\alpha} $ for $ \alpha\in (0,1) $ and $ K_{\alpha}=1 $ for $ \alpha\geq1 $.
We  define the trivial metric on $ T=\{1,2,\ldots,n\} $,
\begin{equation*}
\rho(t,s)=\begin{cases}
2K_{\alpha}\sigma, t\neq s,\\
0,t=s.
\end{cases}
\end{equation*}
With this metric, we  construct an admissible sequence $ \{T_n\} $. There exists $\tilde{m} $ such that $ 2^{2^{\tilde{m}}}\leq n\leq 2^{2^{\tilde{m}+1}} $.
For $ m\leq \tilde{m}+1 $, take $ T_m=\{1\}$. 
For $ m>\tilde{m}+1 $, take $ T_m=T $. 
Then, we obtain 
\begin{equation*}
\sup_{t\in T}\sum_{m=0}^{\infty}\rho(t,T_n)=\sup_{t\in T} \sum_{m=0}^{\tilde{m}+1} 2^{m/\alpha}\rho(t,T_m)=2K_{\alpha}\sigma \sum_{m=0}^{\tilde{m}+1} 2^{m/\alpha}=\sigma C_{\alpha}2^{\tilde{m}/\alpha},
\end{equation*}
where $C_\alpha$ is a constant  dependent only on $\alpha$. 
Noting that  $ 2^{\tilde{m}} \lesssim \log n $, we  obtain
\begin{equation*}
\gamma_{\alpha}(T,\rho)\leq C_{\alpha}\sigma (\log n)^{1/\alpha}.
\end{equation*}
Observe that 
\begin{equation*}
\sup_{i\in\{1,\dots,n\}}|\xi_i| \leq |\xi_1|+   \sup_{i\in\{1,\dots,n\}}|\xi_i-\xi_1|.
\end{equation*}
By (\ref{in:dirksen}) we get
\begin{align*}
 &\Pb\left(\sup_{i\in\{1,\cdots,n\}}|\xi_i|\geq e^{1/\alpha}\left(C_{1,\alpha}\sigma(\log n)^{1/\alpha}+ (C_{2,\alpha}+1) \sigma t\right)\right) \\
 \leq&\Pb\left(\sup_{i\in\{1,\cdots,n\}}|\xi_i-\xi_1|\geq e^{1/\alpha}\left(C_{1,\alpha}\sigma(\log n)^{1/\alpha}+ C_{2,\alpha} \sigma t\right)\right)+\Pb\left(|\xi_1|\geq e^{1/\alpha}\sigma t\right)\\
 \leq &\exp(-t^{\alpha}/\alpha)+\frac{2}{e}\exp(-t^\alpha/\alpha)\leq 2\exp(-t^\alpha/\alpha) ,
\end{align*}
where we use  $\Pb(X+Y\geq a+b)\leq \Pb(X\geq a)+\Pb(Y\geq b)$. 
Equivalently, the concentration inequality reads
\begin{equation}\label{inequality: max alpha}
 \Pb\left(\sup_{i\in\{1,\cdots,n\}}|\xi_i|\geq C_{1,\alpha} \sigma(\log n)^{1/\alpha}+ C_{2,\alpha} \sigma t^{1/\alpha}\right) \leq 2\exp(-t).  
\end{equation}
As a consequence, if $ \{\xi_i\}_{i=1}^n $ is a sequence of independent sub-Gaussian random variables satisfying $ \Vert \xi_i\Vert_{\psi_2}\leq \sigma $, there exist constants $ C_1, C_2 $ such that
\begin{equation}\label{inequality: max subgaussian }
\Pb\left(\sup_{i\in\{1,\ldots,n\}} |\xi_i|\geq C_1 \sigma\sqrt{\log n}+C_2\sigma\sqrt{t}\right)\leq 2\exp(-t).
\end{equation}

\begin{lemma}
Let $Y$ be a random variable with zero mean such that $\Eb[Y^2]\leq \sigma^2$. 
Then, for any $y$, 
\begin{equation}\label{inequality: truc var}
 \var(\phi_{y}(Y))\leq\sigma^2 .   
\end{equation}
\end{lemma}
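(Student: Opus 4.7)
The plan is to exploit the pointwise bound $|\phi_y(x)| \leq |x|$ together with the elementary fact that the variance of a random variable is dominated by its second moment. Notably, the mean-zero hypothesis on $Y$ is not actually used in the argument; only the second-moment bound enters.

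First, I would verify the pointwise inequality $\phi_y(x)^2 \leq x^2$ by a case split based on the definition of $\phi_y$. When $|x|\leq y$ one has $\phi_y(x)=x$, so equality holds; when $|x|>y$ one has $\phi_y(x) = \sign(x) y$, so $\phi_y(x)^2 = y^2 \leq x^2$. Taking expectations across this pointwise inequality gives $\Eb[\phi_y(Y)^2] \leq \Eb[Y^2] \leq \sigma^2$.

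Second, since $\var(Z) = \Eb[(Z-\Eb[Z])^2] = \min_{c\in \mathbb{R}} \Eb[(Z-c)^2]$, the choice $c = 0$ yields $\var(\phi_y(Y)) \leq \Eb[\phi_y(Y)^2]$. Chaining the two bounds produces the desired inequality $\var(\phi_y(Y)) \leq \sigma^2$.

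There is essentially no obstacle here; the argument is a one-line consequence of the Huber derivative being a contraction of the identity toward the origin. The lemma functions as a convenient technical fact invoked in the proof of Lemma \ref{lemma: sharp spectral} (to bound the conditional variance of the truncated noise), so a terse verification is all that is required.
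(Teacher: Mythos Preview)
Your proof is correct but takes a different route from the paper. The paper introduces an independent copy $Y'$ of $Y$ and uses that $\phi_y$ is $1$-Lipschitz to write
\[
2\var(\phi_y(Y)) = \Eb\bigl(\phi_y(Y)-\phi_y(Y')\bigr)^2 \leq \Eb(Y-Y')^2 = 2\Eb[Y^2],
\]
whereas you use the pointwise contraction $|\phi_y(x)|\leq |x|$ together with $\var(Z)\leq \Eb[Z^2]$. Your argument is slightly more elementary (no auxiliary copy) and, as you note, does not use the mean-zero hypothesis; the paper's Lipschitz/symmetrization route gives the marginally sharper intermediate inequality $\var(\phi_y(Y))\leq \var(Y)$ and would apply to any $1$-Lipschitz function, not only those with $|f(x)|\leq|x|$. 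For the purpose of the lemma as stated, either approach is entirely adequate.
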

\begin{proof}[\textbf{Proof}]
 Let $Y'$ be an independent copy of $Y$. 
 Since  $\phi_y(\cdot)$ is Lipchitz with Lipchitz constant $1$, we obtain
 \begin{equation*}
  2\var(\phi_{y}(Y)) \leq  \Eb[(\phi_{y}(Y)-\phi_{y}(Y'))^2]\leq \Eb[(Y'-Y)^2]=2\Eb [Y^2]. 
 \end{equation*}
Then, we conclude the lemma.
\end{proof}

\section{Conclusion}\label{section: discussion}

In this paper, we considered three common matrix completion settings: (1) the unknown matrix is of low rank and the noise is heavy tailed; (2) the unknown matrix is of low rank and the noise is sub-Gaussian with known variance; (3) the unknown matrix is of low rank and the noise is sub-Gaussian with unknown variance. We revisited three popular estimators in these settings and developed sharper upper bounds to establish their minimax rate optimality.

A key technical contribution lies in employing powerful matrix concentration inequalities introduced in \cite{brailovskaya2024universality} to successfully remove the dimension factor $\log d$ that has remained in the upper bounds prior to our work. As previously noted, the $\log d$ factor is ubiquitous in matrix completion problems under the \emph{sampling with replacement} setting. Our sharp analyses (e.g., Lemma~\ref{lemma: sharp spectral} establishes dimension-free results for a variety of random matrices) can be applied or adapted to improve other existing results on matrix completion \citep{koltchinskii2011nuclear,negahban2012restricted,klopp2017robust,chen2024dynamic}. The current paper studies optimal rates in terms of sample size and matrix dimensions. An important future research is to establish a full characterization of minimax rates with respect to other problem parameters including $\{\mu,a,\sigma\}$.

\begin{acks}[Acknowledgments]
The authors would like to thank the anonymous referees,  and the editor for their constructive comments that significantly improved  the quality of this paper.
\end{acks}

\bibliographystyle{imsart-number} 
\bibliography{ref}       






\end{document}